\let\mathcal\mathscr
\numberwithin{equation}{section}
\newtheorem{theorem}{Theorem}[section]
\newtheorem{lemma}[theorem]{Lemma}
\newtheorem{proposition}[theorem]{Proposition}
\theoremstyle{definition}
\newtheorem{remark}[theorem]{Remark}
\newtheorem*{remark*}{Remark}
\newcommand{\BA}{{\mathbb {A}}}
\newcommand{\BC}{{\mathbb {C}}}
\newcommand{\BL}{{\mathbb {L}}}
\newcommand{\BN}{{\mathbb {N}}}
\newcommand{\BP}{{\mathbb {P}}}
\newcommand{\BQ}{{\mathbb {Q}}}
\newcommand{\BR}{{\mathbb {R}}}
\newcommand{\BS}{{\mathbb {S}}}
\newcommand{\BZ}{{\mathbb {Z}}}
\newcommand{\CA}{{\mathcal {A}}}
\newcommand{\CB}{{\mathcal {B}}}
\newcommand{\CF}{{\mathcal {F}}}
\newcommand{\CG}{{\mathcal {G}}}
\newcommand{\CH}{{\mathcal {H}}}
\newcommand{\CI}{{\mathcal {I}}}
\newcommand{\CJ}{{\mathcal {J}}}
\newcommand{\CK}{{\mathcal {K}}}
\newcommand{\CN}{{\mathcal {N}}}
\newcommand{\CS}{{\mathcal {S}}}
\newcommand{\CT}{{\mathcal {T}}}
\newcommand{\CU}{{\mathcal {U}}}
\newcommand{\CV}{{\mathcal {V}}}
\newcommand{\bb}{{\mathbf{b}}}
\renewcommand{\phi}{\varphi}
\renewcommand{\rho}{\varrho}
\renewcommand{\epsilon}{\varepsilon}
\newcommand{\x}{\mathbf{x}}
\newcommand{\bx}{\boldsymbol{x}}
\newcommand{\by}{\boldsymbol{y}}
\newcommand{\bz}{\boldsymbol{z}}
\newcommand{\bt}{\boldsymbol{t}}
\newcommand{\bs}{\boldsymbol{s}}
\newcommand{\bc}{\boldsymbol{c}}
\newcommand{\bu}{\boldsymbol{u}}
\newcommand{\blambda}{\boldsymbol{\lambda}}
\newcommand{\bsigma}{\boldsymbol{\sigma}}
\newcommand{\bbeta}{\boldsymbol{\beta}}
\newcommand{\e}{\textup{e}}
\title[Ternary quadratic forms II]{Quantitative strong approximation for ternary quadratic forms II}
\author{Zhizhong Huang}
\address{Institute of Mathematics, Academy of Mathematics and Systems Science, Chinese Academy of Sciences, Beijing, 100190, China}
\email{zhizhong.huang@yahoo.com}
\date{November 2024}
\begin{document}
	\begin{abstract}
		Let $F$ be a non-degenerate integral ternary  quadratic form and let $m_0\in\BZ_{\neq 0}$. We study growth of rational points on the affine quadric $(F=m_0)$ and show that they are equidistributed in the adelic space off a finite place. This is closely related to Linnik's problem. Our approach is based on the $\delta$-variant of the Hardy--Littlewood circle method developed by Heath-Brown.
	\end{abstract}
	\maketitle
	\tableofcontents
	
	\section{Introduction}
	Let $F$ be an integral non-degenerate  quadratic form in $d$-variables and let $m_0\in\BZ_{\neq 0}$, which will be fixed throughout. For a growing parameter $N\in\BN$, we denote by $V_N$ the affine quadric $(F=m_0N)\subset \BA^d$ over $\BQ$ with the integral model $\CV_N$ defined by the same equation. Assume that $V_1(\BR)\neq\varnothing$.
	Consider the projection map
	\begin{align*}
		\pi_N:V_N(\BR) &\longrightarrow V_1(\BR)\\
		\bx&\longmapsto \frac{\bx}{\sqrt{N}}.
	\end{align*}
	Linnik's problem \cite{Linnik} is concerned with the distribution of the integral points $\CV_N(\BZ)$ under the map $\pi_N$ on $V_1(\BR)$.
	
		Let $w:\BR^d\to\BR$ be a weight function (infinitely differentiable and compactly supported). Suppose that $\operatorname{Supp}(w)\cap V_{1}(\BR)$ is a compact region of $V_1(\BR)$ with non-empty interior. Let $L\in\BN$ be fixed. To each $N\to\infty$, we associate a residue $\blambda_N\in \CV_N(\BZ/L\BZ)$, viewed as an element of $\BZ^d$ of size $\ll L$ satisfying $F(\blambda_N)\equiv m_0N\bmod L$. To these data we associate the weighted  counting function:
	\begin{equation}\label{eq:countingfun}
		\Gamma_{w}(N):=\sum_{\substack{\bx\in\BZ^d,F(\bx)=m_0N\\ \bx\equiv \blambda_N\bmod L}}w\left(\frac{\bx}{\sqrt{N}}\right).
	\end{equation}
	Observe that this is a finite sum because it only counts integer vectors $\bx\in\sqrt{N}\operatorname{Supp}(w)$.
	
	Past applications of the Hardy--Littlewood circle method had been successful only when $d\geqslant 4$ (the breakthrough for $d=4$ goes back to Kloosterman \cite{Kloosterman} almost a century ago). So we shall fixate on the \emph{ternary} case $d=3$, where was long considered as beyond reach of the circle method. The case where $F$ is positive definite (and with a different type of congruence conditions) has been extensively studied by many authors \cite{Duke,Duke-SchP, Golubeva-Fomenko, Iwaniec}, and it is shown that $\CV_N(\BZ)$, as $N\to\infty$ (subjected to appropriate conditions), become equidistributed on $V_1(\BR)$ via the map $\pi_N$. When $F$ is assumed to be anisotropic over $\BQ$ and $N$ has only restricted prime divisors, see also \cite{Benoist-Oh}.
	
	The goal of this article is to push the circle method further (for the first time as far as the author is aware) 
	to address Linnik's type ordering problem for ternary quadratic forms. 

	\subsection{Main results on equidistribution of $\BZ[\frac{1}{p_0}]$-points}
Let us fix throughout a prime $p_0$, and let $V_1(\mathbf{A}_{\BQ}^{p_0})$ be the \emph{adelic space off $p_0$} of $V_1$, which is by definition the image of of adelic space $V_1(\mathbf{A}_{\BQ})$ (the restricted product over all $V(\BQ_p)$ against integral models of $V_1$) under the projection without the $p_0$-component. Let $V_1(\mathbf{A}_{\BQ})^{\operatorname{Br}}$ be the closed subset of $V_1(\mathbf{A}_{\BQ})$ consisting of elements that are orthogonal to the Brauer group of $V_1$.
Thanks to work of Colliot-Thélène--Xu \cite[\S5.6, \S5.8]{CT-XuCompositio} \cite[Proposition 4.5]{CT-Xu}, provided that  the quadratic form $F$ is isotropic over $\BQ_{p_0}$, the set $V(\BQ)$ is dense inside the image of $V_1(\mathbf{A}_{\BQ})^{\operatorname{Br}}$ in $V_1(\mathbf{A}_{\BQ}^{p_0})$. We refer to \cite[\S1]{CT-XuCompositio} and \cite[\S1]{CT-Xu} for more details about terminology, based on which we say that strong approximation with Brauer--Manin obstruction off the prime $p_0$  holds for the affine quadric $V_1$.  

 In our current investigation, motivated by this qualitative aspect, we consider the adelic subspace $$\CV_1\left(\widehat{\BZ}\left[\frac{1}{p_0}\right]\right):=V_1(\BR)\times\prod_{\substack{p<\infty, p\neq p_0}}\CV_1(\BZ_p)\subset  V_1(\mathbf{A}_{\BQ}^{p_0}),$$ which we tacitly assume to be non-empty.  Let us fix $\blambda\in \CV_1(\BZ/L\BZ)$, viewed as a vector in $\blambda\in\BZ^3$ of size $\ll L$ such that $F(\blambda)\equiv m_0\bmod L$. As $(L,\blambda)$ captures non-archimedean local conditions outside of $p_0$, we assume that $p_0\nmid L$.  The collection $\{w,\CV_1,(L,\blambda)\}$ specifies a (non-empty) open-closed subset of $\CV_1\left(\widehat{\BZ}\left[\frac{1}{p_0}\right]\right)$. 

To quantify the strong approximation property off $p_0$ for $V_1$, we are interested in counting rational points on $V_1$ satisfying the local conditions prescribed by the collection $\{w,\CV_1,(L,\blambda)\}$. If for every prime $p$, we equip the affine $3$-space $\BA^3_{\BQ}$ with the $p$-adic height $$\|(x_1,x_2,x_3)\|_{p}:=\max_{1\leqslant i\leqslant 3}|x_i|_{p},$$ 
then the growth of $V_1(\BQ)$ with these prescribed local conditions may be quantified by the counting function $$\#\left\{\bx\in V_1\left(\BQ\right)\cap \CV_1\left(\widehat{\BZ}\left[\frac{1}{p_0}\right]\right)\left|\begin{aligned}
	&\|\bx\|_{p_0}\leqslant p_0^h\\ &\bx\in \operatorname{Supp}(w)\text{ and }\forall p\mid L,\|\bx-\blambda\|_{p}\leqslant p^{-\operatorname{ord}_{p}(L)}
\end{aligned}\right.\right\}.$$ Here $h\in\BN$ is a growing parameter. 
We therefore let \begin{equation}\label{eq:Np0}
 	N=p_0^{2h}, \quad \blambda_N:=p_0^h\blambda,
 \end{equation} so that the counting function  $\Gamma_{w}(N)$ \eqref{eq:countingfun} ``mollifies'' the problem above. We would like to stress that the quadratic form $F$ need not be positive definitive or $\BQ$-anisotropic. 
	
	We define the \emph{weighted singular integral} by \begin{equation}\label{eq:singint}
		\widetilde{\CI}(w):=\int_{\BR^3\times\BR}w(\bt)\e(\theta (F(\bt)-m_0))\operatorname{d}\bt\operatorname{d}\theta. 
	\end{equation} It computes the (weighted) real density of the affine quadric $V_1$ restricted to the support of $w$ (cf. e.g. \cite[Theorem 3]{H-Bdelta}).  
	
	We denote by $\psi_0$ the real character $\left(\frac{-m_0\Delta_F}{\cdot}\right)$, being principal or not. Let us define, for every prime $p$, 
	$$\sigma_{p}(\CV_1;(L,\blambda)):=\lim_{k\to\infty}\frac{\#\{\bx\in(\BZ/p^k\BZ)^3:F(\bx)\equiv m_0\bmod p^k,\bx\equiv \blambda\bmod p^{\operatorname{ord}_p(L)}\}}{p^{2k}}.$$
	For $p_0$ (we call that $p_0\nmid L$), we let
	$$\sigma_{p_0}(\CV_0):=\lim_{k\to\infty}\frac{\#\{\bx\in(\BZ/p_0^k\BZ)^3:F(\bx)\equiv 0\bmod p_0^k\}}{p_0^{2k}},$$ which is the $p_0$-adic density of the affine cone $$\CV_0:=(F=0)\subset\BA^3_{\BZ},$$ and is positive if and only if the quadratic form $F$ is isotropic over $\BQ_{p_0}$.
	 We define the ``modified singular series''  $\widetilde{\mathfrak{S}}_F(p_0;L,\blambda)$  as follows, the convergence of which will be established in course of proof.
	 \begin{footnotesize}
	\begin{equation}\label{eq:singser}
	\widetilde{\mathfrak{S}}_F(p_0;L,\blambda):=	\begin{cases}
		\displaystyle	\left(1-\frac{\psi_0(p_0)}{p_0}\right)\sigma_{p_0}(\CV_0)\times\prod_{\substack{p<\infty\\ p\neq p_0}}\left(1-\frac{\psi_0(p)}{p}\right)\sigma_{p}(\CV_1;(L,\blambda)) &\text{ if } -m\Delta_F\neq\square;\\
		\displaystyle	\left(1-\frac{1}{p_0}\right)\sigma_{p_0}(\CV_0)\times\prod_{\substack{p<\infty\\ p\neq p_0}}\left(1-\frac{1}{p}\right)\sigma_{p}(\CV_1;(L,\blambda)) &\text{ if } -m\Delta_F=\square.
		\end{cases}
	\end{equation} 
	 \end{footnotesize}
	\begin{theorem}\label{thm:mainsq}
		Assume that \begin{equation}\label{eq:m0deltasq}
			-m_0\Delta_F=\square.
		\end{equation}
	Then there exists $a_h\in\BC$ depending on $\{w,\CV_1,(L,\blambda)\}$ and $h$ such that
	$$\Gamma_w(N)=\widetilde{\CI}(w)\widetilde{\mathfrak{S}}_F(p_0;L,\blambda)\sqrt{N}\log \sqrt{N}+a_h\sqrt{N}+O\left(\frac{\sqrt{N}}{(\log N)^{A}}\right),$$ where $A>0$ is an effectively computable numerical constant. Moreover $a_h=O(1)$.
	\end{theorem}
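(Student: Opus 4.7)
The plan is to apply Heath-Brown's $\delta$-variant of the circle method to $\Gamma_w(N)$ and then carry out the main-term extraction via a contour shift on the resulting Dirichlet series of Gauss-type exponential sums. After the $\delta$-expansion, $\Gamma_w(N)$ can be written in the schematic shape
\begin{equation*}
\Gamma_w(N) \;=\; \frac{C_Q}{Q^2} \sum_{q \leqslant Q} \frac{1}{q^3} \sum_{\bc\in\BZ^3} S_q(\bc;\blambda_N,L)\, I_q(\bc;w,N),
\end{equation*}
with $Q \asymp \sqrt{N}$, where $S_q(\bc;\blambda_N,L)$ is a complete exponential sum incorporating the congruence mod $L$ and $I_q(\bc;w,N)$ is a smooth oscillatory integral. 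Splitting off the zero frequency $\bc=\mathbf{0}$, I would first analyze the ``diagonal'' contribution (which yields the stated main and secondary terms) and then bound the sum over $\bc\neq\mathbf{0}$ by $O(\sqrt{N}(\log N)^{-A})$.

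For the diagonal contribution, a standard evaluation of the ternary quadratic Gauss sum $S_q(\mathbf{0};\blambda_N,L)$ in terms of Jacobi symbols expresses the associated Dirichlet series essentially as $L(s,\psi_0)$ times an absolutely convergent Euler product encoding the local densities $\sigma_p(\CV_1;(L,\blambda))$ at $p\neq p_0$, together with the cone density $\sigma_{p_0}(\CV_0)$ at $p_0$ (this replaces the $p_0$-fibre density because $v_{p_0}(m_0 N)=2h$ grows with $N$). Under the hypothesis $-m_0\Delta_F = \square$, the character $\psi_0$ is principal, so $L(s,\psi_0)$ coincides with $\zeta(s)$ up to finitely many Euler factors and therefore has a simple pole at $s=1$. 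Applying a Perron-type inversion to return from the Dirichlet series to the partial sum over $q\leqslant Q$, and shifting the contour slightly to the left of $\Re s=1$ using Vinogradov's zero-free region for $\zeta$, I extract the pole---whose residue, after combining with the archimedean factor $\widetilde{\CI}(w)$ coming from $I_q(\mathbf{0};w,N)$, produces exactly $\widetilde{\CI}(w)\widetilde{\mathfrak{S}}_F(p_0;L,\blambda)\sqrt{N}\log\sqrt{N}$---together with a Laurent-constant term giving $a_h\sqrt{N}$ and a remainder $O(\sqrt{N}(\log N)^{-A})$. The apparent linear-in-$h$ growth coming from $\log Q = h\log p_0 + O(1)$ is absorbed into the leading $\sqrt{N}\log\sqrt{N}$, leaving $a_h$ genuinely bounded.

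The principal obstacle will be the treatment of the dual frequencies $\bc\neq\mathbf{0}$, which for ternary forms sit exactly at the convergence threshold: a direct trivial bound gives size $\sqrt{N}$, matching the main term. I would exploit additional averaging over $q$ combined with Poisson summation in $\bc$, together with a careful analysis of the stationary points of $I_q(\bc;w,N)$ on the dual quadric, to gain a logarithmic saving of arbitrary polynomial power in $1/\log N$. A further technical point is the uniformity in the congruence datum $(L,\blambda)$ and the careful bookkeeping of $\blambda_N = p_0^h \blambda$ under reduction modulo divisors of $q$; this is handled by factoring $q = q_1 q_2$ with $q_1 \mid L^\infty$ and $(q_2,L)=1$, after which the local densities at primes dividing $L$ assemble into $\prod_{p \mid L} \sigma_p(\CV_1;(L,\blambda))$ as in the stated $\widetilde{\mathfrak{S}}_F$. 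Together with the $p_0$-Euler factor and the archimedean integral, this produces precisely the announced asymptotic.
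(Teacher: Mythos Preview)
Your outline of the $\bc=\mathbf{0}$ contribution is essentially correct and matches the paper: the Dirichlet series attached to $\widetilde{S}_q(\mathbf{0})$ factors as $\zeta(s-2)$ (up to finite Euler factors, since $\psi_0$ is principal under \eqref{eq:m0deltasq}) times an absolutely convergent product, and a Perron-type argument extracts the $\sqrt{N}\log\sqrt{N}$ main term plus a constant $a_0\sqrt{N}$ with power-saving remainder. (The paper in fact gets $O_\varepsilon(N^{5/4+\varepsilon})$ here, so no zero-free region is needed.)

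The genuine gap is in your treatment of $\bc\neq\mathbf{0}$. You assert that the entire off-diagonal sum is $O(\sqrt{N}(\log N)^{-A})$, but this is false: the paper shows (Theorem~\ref{thm:cneq0sum}) that the nonzero frequencies contribute a \emph{main term} $\CK_h\sqrt{N}$ in addition to the log-saving error, and the secondary constant in the theorem is $a_h=a_0+L^2\CK_h$, not just the Laurent constant $a_0$ from $\bc=\mathbf{0}$. The mechanism is that one must separate the $\bc\neq\mathbf{0}$ into \emph{exceptional} ($m_0\Delta_F F^*(\bc)=\square$) and \emph{ordinary} (otherwise). For exceptional $\bc$ the quadratic polynomial $G_{\bc,l}(T)$ in \eqref{eq:polyG} is reducible, the root sum $\sum_{G_{\bc,q_2}(u)\equiv 0\bmod q_1}\e_{q_1}(u)$ has no cancellation on average over $q_1$, and one must explicitly extract a term $\widetilde{\eta}_h(\bc)$ of size $\asymp 1$ (Theorem~\ref{thm:exec}); summing these over exceptional $\bc$ gives $\CK_h$. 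Only for ordinary $\bc$ does one obtain the log-saving, and there the cancellation comes not from further Poisson summation in $\bc$ or stationary phase alone, but from nontrivial bounds on sums of twisted Sali\'e sums over $q$ (Proposition~\ref{prop:sumSalie}, Lemmas~\ref{le:Kloosterman2}--\ref{le:sumSalie2}), ultimately resting on Burgess-type character sum estimates. Your proposed toolkit of ``Poisson summation in $\bc$ together with stationary points on the dual quadric'' would not detect this dichotomy and cannot produce the required saving for exceptional $\bc$, because there is none to be had.
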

The condition \eqref{eq:m0deltasq} ensures the triviality of the Brauer group of $V_1$, whence $V_1(\mathbf{A}_{\BQ})^{\operatorname{Br}}=V_1(\mathbf{A}_{\BQ})$. Theorem \ref{thm:mainsq}  provides a quantitative aspect of how $\CV_1(\BZ[\frac{1}{p_0}])$ equidistributes in $\CV_1\left(\widehat{\BZ}\left[\frac{1}{p_0}\right]\right)$, in particular in every factor $\CV_1(\BZ_{p}),p\neq p_0$ as well as $V_1(\BR)$. Note that they accumulate under the $p_0$-adic topology around the boundary divisor $(F=0)$ of the naive compactification of $V_1$ in $\BP^3$. This is in accordance with the Chambert-Loir--Tschinkel heuristic (cf. \cite[Appendix]{Huang}).

\begin{theorem}\label{thm:mainnotsq}
	Assume that 	\begin{equation}\label{eq:notsq}
		-m_0\Delta_F \neq\square.
	\end{equation} 		
Then there exists $b_h\in\BC$ depending on $\{w,\CV_1,(L,\blambda)\}$ and $h$ such that 
$$\Gamma_w(N)=\sqrt{N}\left(\widetilde{\CI}(w)\widetilde{\mathfrak{S}}_F(p_0;L,\blambda)+b_h\right)+O\left(\frac{\sqrt{N}}{(\log N)^{A}}\right),$$ with $A>0$ as before and $b_h=O(1)$.
\end{theorem}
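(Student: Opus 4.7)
The plan is to follow the same Heath-Brown $\delta$-method framework used for Theorem~\ref{thm:mainsq}, observing that hypothesis \eqref{eq:notsq} actually \emph{simplifies} the analysis. Under \eqref{eq:notsq} the real character $\psi_0 = \left(\frac{-m_0\Delta_F}{\cdot}\right)$ is non-principal, so the associated Dirichlet $L$-function $L(s,\psi_0)$ is entire and takes a finite non-zero value at $s=1$ — and this is precisely what causes the $\sqrt N\log\sqrt N$ factor of Theorem~\ref{thm:mainsq} to disappear, leaving only a $\sqrt N$ main term.

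Concretely, I would start by inserting Heath-Brown's $\delta$-expansion for $\delta(F(\bx)-m_0 N)$ in \eqref{eq:countingfun}, opening the congruence $\bx\equiv\blambda_N\bmod L$ by orthogonality of additive characters modulo $L$, and Poisson-summing in $\bx$. Choosing $Q=\sqrt{N}$, this recasts $\Gamma_w(N)$ as a sum indexed by $(q,\bc)$ with $q\geqslant 1$ and $\bc\in\BZ^3$, with weights a complete Gauss sum $S_q(\bc;\blambda_N,L)$ built from $F$ and the congruence data, times a smooth archimedean integral $I_q(\bc;w,N)$ that decays as soon as $q\|\bc\|\gg\sqrt N$. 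This is precisely the point where $d=3$ has always been regarded as the borderline case for the circle method.

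The diagonal term $\bc=\mathbf 0$ unfolds to the expected product of the singular integral $\widetilde{\CI}(w)$ with the non-archimedean densities $\sigma_p(\CV_1;(L,\blambda))$, together with the depth-$h$ contribution at $p_0$ that eventually becomes part of $b_h\sqrt N$. For $\bc\neq\mathbf 0$ one diagonalises $F$ prime by prime: completing the square makes $S_q(\bc;\blambda_N,L)$ factor into classical quadratic Gauss sums, and the resulting character sum in $q$ assembles, via Mellin inversion, into the Dirichlet series
\[
\prod_p\bigl(1-\psi_0(p)p^{-s}\bigr)^{-1}\;=\;L(s,\psi_0)
\]
multiplied by an absolutely convergent Euler product. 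Shifting the contour past $\operatorname{Re}(s)=1$ picks up the value $L(1,\psi_0)$, producing exactly the factors $(1-\psi_0(p)/p)$ appearing in the first case of \eqref{eq:singser}; combined with $\widetilde{\CI}(w)$ and the $p_0$-local factor $(1-\psi_0(p_0)/p_0)\sigma_{p_0}(\CV_0)$ one recovers $\widetilde{\CI}(w)\widetilde{\mathfrak{S}}_F(p_0;L,\blambda)\sqrt N$. Unlike in the proof of Theorem~\ref{thm:mainsq}, no residue is collected, which is why only one leading term survives and no $\log\sqrt N$ appears.

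The hard part will be quantifying the error with a power-of-log saving $O\bigl(\sqrt N/(\log N)^A\bigr)$. Three ingredients are needed: a Vinogradov--Korobov type zero-free region for $L(s,\psi_0)$ permitting the above contour shift; uniform estimates for the incomplete Gauss sums $S_q(\bc;\blambda_N,L)$ exhibiting square-root cancellation in the $q$-aspect for $d=3$; and sharp decay of $I_q(\bc;w,N)$ in both $q$ and $\bc$ exploiting the smoothness of $w$. These are already present in the proof of Theorem~\ref{thm:mainsq}, and in the non-square case they are strictly easier to apply because one no longer has to navigate around the pole of $\zeta(s)$ at $s=1$; the contribution of moduli $q$ with $p_0\mid q$ collapses into the bounded quantity $b_h$ by the same argument that produces $a_h$ in Theorem~\ref{thm:mainsq}. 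In short, Theorem~\ref{thm:mainnotsq} should emerge as a streamlined variant of Theorem~\ref{thm:mainsq} in which the absence of a pole cleans up both the main term and the error analysis.
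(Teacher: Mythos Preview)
Your broad framework is right: Heath-Brown's $\delta$-method with Poisson summation, the split into $\bc=\boldsymbol 0$ and $\bc\neq\boldsymbol 0$, and the observation that $\psi_0$ being non-principal kills the $\log$ term. But several substantive points are misidentified, and the proposed error analysis would not work.

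First, the attribution of terms is inverted. In the paper the \emph{entire} main term $\sqrt N\,\widetilde{\CI}(w)\widetilde{\mathfrak{S}}_F(p_0;L,\blambda)$ comes from $\bc=\boldsymbol 0$: the Dirichlet series $\sum_q \widetilde S_q(\boldsymbol 0)q^{-s}$ factors as $L(s-2,\psi_0)$ times an Euler product that is convergent at $s=3$, and the value $L(1,\psi_0)$ appears directly (Proposition~\ref{prop:singnonsq}); no contour shift through a pole is needed because $L(s,\psi_0)$ is entire. Conversely, the constant $b_h$ does \emph{not} come from ``moduli $q$ with $p_0\mid q$'' or from a depth-$h$ piece of the $\bc=\boldsymbol 0$ term. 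It equals $L^2\CK_h$, where $\CK_h$ is the sum over \emph{exceptional} $\bc\neq\boldsymbol 0$ (those with $m_0\Delta_F F^*(\bc)=\square$) of the limiting constants $\widetilde\eta_h(\bc)$ produced by Theorem~\ref{thm:exec}.

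Second, and more seriously, your proposed mechanism for the log-power saving is not what works here. A Vinogradov--Korobov zero-free region for $L(s,\psi_0)$ and Mellin inversion in $q$ play no role in bounding the $\bc\neq\boldsymbol 0$ contribution. The paper's proof splits $\bc\neq\boldsymbol 0$ into exceptional and \emph{ordinary} $\bc$ (those with $m_0\Delta_F F^*(\bc)\neq\square$). The ordinary case (Theorem~\ref{thm:cord}) is where the saving is won, and it requires two genuinely hard ingredients: an unconditional Linnik-type bound for sums of twisted Sali\'e sums attached to the quadratic $G_{\bc,l}(T)$ (Proposition~\ref{prop:sumSalie}, Lemma~\ref{le:sumSalie2}), and a refined stationary-phase treatment of $\widetilde{\CI}_r(w;\bb)$ that circumvents the divergence caused by the inhomogeneous term $m_0$ (see Remark~\ref{rmk:ordc} and the ``bad pair'' analysis around \eqref{eq:badpair}). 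Square-root cancellation in a single complete Gauss sum is already accounted for in Lemma~\ref{leS2bd} and is not enough: it gives only $\sum_{q\ll Q}|\widetilde S_q(\bc)|/q^3\ll Q^{-1/2+\varepsilon}$-type bounds after summing, and the difficulty is precisely the cancellation \emph{over} $q$ when the oscillatory integral is coupled in. Finally, the claim that the non-square case is ``strictly easier'' is only true for the $\bc=\boldsymbol 0$ term; the $\bc\neq\boldsymbol 0$ analysis (Theorems~\ref{thm:exec} and~\ref{thm:cord}) is identical in both theorems and is where all the work lies.
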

	
	Under \eqref{eq:notsq}, results of similar sort are also obtained by Benoist--Oh \cite[Corollaries 1.11 \& 1.12]{Benoist-Oh} (with an inexplicit leading constant) using mixing properties of Lie groups. \footnote{Note that in \cite{Benoist-Oh} it is assumed that the quadratic form $F$ is anisotropic over $\BQ$. In  our setting this condition turns out to be equivalent to \eqref{eq:notsq} if $V_1(\BQ)\neq\varnothing$.}
	
	\subsection{Methods of proof and comments}
Our proof of Theorems \ref{thm:mainsq} \ref{thm:mainnotsq} is based upon the $\delta$-variant of the circle method developed by Heath-Brown \cite{H-Bdelta} (and goes back to \cite{D-F-I}), and is underpinned by the blueprint laid down in \cite{Huang}. Having disposed of the ingredients developed in \cite{Huang} dealing with various sums of Salié sums, the main technical challenge of the current article is to tackle the oscillatory integrals, especially their partial derivatives. Due to the inhomogeneous form of the equation defining $V_1$, we need to go beyond the estimates furnished by \cite[Lemma 22]{H-Bdelta}, and have to contend with the appearance of oscillatory factors while differentiating the oscillatory integrals in the treatment of both  exceptional  and especially ordinary Poisson variables. We refer to Remarks \ref{rmk:exceptionalc} and \ref{rmk:ordc} respectively for more explanations.
	
	Under the assumption \eqref{eq:notsq} the affine quadric $V_1$ carries a non-trivial Brauer group that can evaluate non-constantly (cf. e.g. \cite[Lemma 4.3]{CT-Xu}).
	 For instance, it can happen that these local conditions specify a closed subset of $V_1(\mathbf{A}_{\BQ})$ which does not intersect $V_1(\mathbf{A}_{\BQ})^{\operatorname{Br}}$. In that case $\Gamma_{w}(p_0^{2h})= 0$ for any $h$ and we must have $b_h=-\widetilde{\CI}(w)\widetilde{\mathfrak{S}}_F(p_0;L,\blambda)$. However, apart from this particular occasion, due to its complicated (though explicit) form  and  its subtle dependence on how $h\to\infty$, we are not quite able to compute it directly from its expression.\footnote{Prof. Dasheng Wei has kindly informed us that he is able to show that $\Gamma_w(N)\gg \sqrt{N}$ if the the local conditions are unobstructed, using the universal torsor method. From this we can infer that in this case $|b_h|<\widetilde{\CI}(w)\widetilde{\mathfrak{S}}_F(p_0;L,\blambda)$.} Likewise we do not have a good understanding of $a_h$, though $a_h,b_h$ resemble fairly similar to each other, and both arise in roughly the same way as the secondary term in \cite[Theorem 1.2]{Huang}. 
	 
	 In the case where $F$ is positive definitive we can say a bit more about $b_h$, on appealing to work of Duke--Schulze-Pillot \cite{Duke-SchP}. Let us assume for simplicity that $L=1$, i.e. there is no congruence condition. The product of local densities of $\CV_1$, according to Siegel's mass formula (cf. \cite[Theorem 2 (i)]{Duke-SchP}), equals a weighted sum of representation masses of forms of the same genera as $F$. And \cite[Theorem 5]{Duke-SchP} shows that the leading constant for the number of representations of by $F$ equals a weighted sum of  representation masses of forms of the same \emph{spinor} genera as $F$. So $b_h$ measures the difference of these two weighted sums, and according to \cite[Theorem 2 (ii)]{Duke-SchP}, it equals a product of local densities, defined in terms of spinor norms, which can be reinterpreted via evaluation of the Brauer group by work of Colliot-Thélène--Xu \cite{CT-XuCompositio}. From this heuristic,  we expect that in general the term $b_h$ should encode the effect of Brauer--Manin obstruction regarding our imposed local conditions.
	
Keeping our motivation in mind, we content ourselves in the current article with the assumption \eqref{eq:Np0}. In view of Linnik's problem, we do not seek to explore the weakest assumption for $N$ under which our arguments apply unconditionally. Presumably we may flexibly vary  the square part of $N$ and allow  the largest square-free prime divisor of $N$ to grow ``slowly'' perhaps like a power of $\log$. For more general $N$ the ``Grand Riemann Hypothesis'' might be needed. 
\subsection{Plan of the article}
In \S\ref{se:delta}, without any restriction on $N$, we briefly recall how the $\delta$-method \cite{H-Bdelta} is executed for the counting function \eqref{eq:countingfun}, and we gather together the arithmetic ingredients developed in \cite{Huang}.  In \S\ref{se:counting}, depending on the nature of the Poisson variables $\bc$, we analyse in detail their contributions assuming \eqref{eq:Np0}. We complete the proof of Theorems \ref{thm:mainsq} \ref{thm:mainnotsq} in \S\ref{se:completeproof}.

\subsection{Notation}
The letter $p$ is always reserved for a prime number. We write $\e_n(\cdot)=\exp\left(\frac{2\pi i}{n}\cdot\right)$. 
We recall some notation inherited from \cite{PART1,Huang}. Let $\Delta_F$ denote the determinant of $F$ and let $\Omega:=2L\Delta_F$. For any $\kappa>0$, let $\Upsilon_{\kappa}$ be the arithmetic function 
	$$\Upsilon_{\kappa}(n):=\prod_{p\mid n}\left(1-p^{-\kappa}\right)^{-1}.$$
For any $l,q\in\BN$, we write $$q_{l}:=\prod_{p\mid l}p^{\operatorname{ord}_{p}(q)}.$$

	\section{Activating the $\delta$-method}\label{se:delta}
	Using  the $\delta$-method à la Heath-Brown \cite[Theorem 1]{H-Bdelta} and executing Poisson summation, we get, based on well-known manipulation as in e.g. \cite[Proposition 2.5]{PART1}
	\begin{equation}\label{eq:Poisson}
			\begin{split}
			\Gamma_w(N)&=\sum_{\substack{\by\in\BZ^3:L^2\mid F(L\by+\blambda_N)-m_0N}} w\left(\frac{L\by+\blambda_N}{\sqrt{N}}\right)\delta\left(\frac{F(L\by+\blambda_N)-m_0N}{L^2}\right)\\ &=\frac{C_Q}{Q^2}\sum_{q\in\BN}\sum_{\bc\in\BZ^3}\frac{\widetilde{S}_{q}(\bc)\widetilde{I}_{q}(\bc)}{(qL)^{3}}
		\end{split} 
	\end{equation}
for any $Q>1$,
	 where  $$C_Q=1+O_N(Q^{-N}),$$	and \begin{equation}\label{eq:Sqc}
		\widetilde{S}_{q}(\bc):=\sum_{\substack{a\bmod q\\(a,q)=1}}\sum_{\substack{\bsigma\in(\BZ/qL\BZ)^3\\  L^2\mid F(L\bsigma+\blambda_N)-m_0N}}\textup{e}_{qL}\left(a\left(\frac{F(L\boldsymbol{\sigma}+\blambda_N)-m_0N}{L}\right)+\bc\cdot\bsigma\right),
	\end{equation}
	\begin{equation}\label{eq:Iqc}
			\widetilde{I}_{q}(\bc):=\int_{\BR^3}w\left(\frac{L\bs+\blambda_N}{\sqrt{N}}\right)h\left(\frac{q}{Q},\frac{F(L\bs+\blambda_N)-m_0N}{(LQ)^2}\right)\e_{qL}\left(-\bc\cdot\bs\right)\operatorname{d}\bs 
		\end{equation}
 Here $h:(0,\infty)\times\BR\to\BR$ is a ``nice'' infinitely differentiable function of class $\CH$ defined in \cite[\S4]{H-Bdelta}.
	We choose \begin{equation}\label{eq:Q}
		Q:=\frac{\sqrt{N}}{L}.
	\end{equation}
	The quantities above all depend on $N$.
	 \subsection{Quadratic exponential sums}
	For every factorisation $$q=q_1q_2,$$ with $\gcd(q_1,q_2\Omega)=1$, by the Chinese remainder theorem, we can decompose $\widetilde{S}_{q}(\bc)$ as in \cite[Lemma 4.2]{PART1}: $$\widetilde{S}_{q}(\bc)=\widetilde{S}_{q}^{(1)}(\bc)\widetilde{S}_{q}^{(2)}(\bc),$$
where  \begin{equation}\label{eq:S1}
	\widetilde{S}_{q}^{(1)}(\bc):=\sum_{\bsigma_1\bmod q_1}\sum_{\substack{a_1\bmod q_1\\(a_1,q_1)=1}}\e_{q_1}\left(a_1\left(F(q_2L^2\bsigma_1+\blambda_N)-m_0N\right)+\bc\cdot\bsigma_1\right),
\end{equation} and \begin{equation}\label{eq:S2}
\widetilde{S}_{q}^{(2)}(\bc):=\sum_{\substack{\bsigma_2\bmod q_2L\\ L^2\mid F(Lq_1\bsigma_2+\blambda_N)-m_0N}}\sum_{\substack{a_2\bmod q_2\\(a_2,q_2)=1}}\e_{q_2L}\left(a_2\left(\frac{F(Lq_1\bsigma_2+\blambda_N)-m_0N}{L}\right)+\bc\cdot\bsigma_2\right).
\end{equation} 

	The following gives precise evaluation of $\widetilde{S}_{q}^{(1)}(\bc)$. 
	\begin{lemma}[cf. \cite{Huang} Proposition 2.1]\label{le:S1}
		We have, whenever $(q_1,m_0N)=1$, 
	$$\widetilde{S}_{q}^{(1)}(\bc)=\e_{q_1}\left(-\overline{q_2L^2}\blambda_N\cdot \bc\right)q_1^2\left(\frac{-m_0N\Delta_F}{q_1}\right)\sum_{\substack{u\bmod q_1\\ G_{\bc,q_2}(u)\equiv 0\bmod q_1}}\e_{q_1}(u),$$
	where for $l\in\BN,\bc\in\BZ^3$, we define the quadratic polynomial
\begin{equation}\label{eq:polyG}
		G_{\bc,l}(T):=(\Delta_F T)^2-(lL^2)^2  m_0 N \Delta_FF^*(\bc),
\end{equation} and $F^*$ denotes the dual form of $F$.
	\end{lemma}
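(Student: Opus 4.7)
The plan is to absorb the shift $\blambda_N$ via a change of variables, and then factor the double sum as a product of a three-variable quadratic Gauss sum in $\bsigma_1$ and a Salié sum in $a_1$, both of which admit classical closed-form evaluations. The hypothesis $\gcd(q_1,q_2\Omega)=1$ with $\Omega=2L\Delta_F$ guarantees that $q_2L^2$ and $2\Delta_F$ are both invertible modulo $q_1$, legitimising every modular manipulation below.

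First I would perform the bijective substitution $\boldsymbol{\tau}\equiv q_2L^2\bsigma_1+\blambda_N\pmod{q_1}$ on $(\BZ/q_1\BZ)^3$. The linear piece splits as $\bc\cdot\bsigma_1=\overline{q_2L^2}\bc\cdot(\boldsymbol{\tau}-\blambda_N)$, producing the claimed phase prefactor $\e_{q_1}(-\overline{q_2L^2}\blambda_N\cdot\bc)$ outside, leaving
\[
\sum_{\substack{a\bmod q_1\\(a,q_1)=1}}\e_{q_1}(-am_0N)\sum_{\boldsymbol{\tau}\bmod q_1}\e_{q_1}\!\left(aF(\boldsymbol{\tau})+\bc''\cdot\boldsymbol{\tau}\right),\qquad\bc''=\overline{q_2L^2}\bc.
\]
I would then evaluate the inner $\boldsymbol{\tau}$-sum as a classical three-variable Gauss sum with linear term: completing the square (valid since $(q_1,2a\Delta_F)=1$) and diagonalising $F$ prime by prime yields $g(q_1)^3\left(\tfrac{a\Delta_F}{q_1}\right)\e_{q_1}(-\overline{4a\Delta_F}F^*(\bc''))$, where $g(q_1)=\sum_{x\bmod q_1}\e_{q_1}(x^2)$.

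The outer $a$-sum is then a Salié-type sum with parameters $m=-m_0N$ and $n=-\overline{4\Delta_F}F^*(\bc'')$, which by the classical Salié formula (extended multiplicatively via the Chinese Remainder Theorem to $q_1$ odd and coprime to $\Omega$) evaluates to $\epsilon_{q_1}q_1^{1/2}\left(\tfrac{n}{q_1}\right)\sum_{v^2\equiv 4mn\bmod q_1}\e_{q_1}(v)$. Assembling the pieces: the Gauss-sum prefactor $g(q_1)^3\epsilon_{q_1}q_1^{1/2}$ collapses to $q_1^2$ via $\epsilon_{q_1}^4=1$; the Legendre characters combine to $\left(\tfrac{-F^*(\bc'')}{q_1}\right)$, which on the (only non-trivially contributing) support of the Salié sum agrees with $\left(\tfrac{-m_0N\Delta_F}{q_1}\right)$ using $(q_1,m_0N)=1$; and the Salié congruence $v^2\equiv m_0N\overline{\Delta_F}F^*(\bc'')\pmod{q_1}$ becomes, upon substituting $F^*(\bc'')\equiv\overline{(q_2L^2)^2}F^*(\bc)\pmod{q_1}$ and rescaling the root variable appropriately, precisely the condition $G_{\bc,q_2}(u)\equiv 0\pmod{q_1}$.

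The main technical subtleties will be: (i) tracking signs in the three-variable Gauss sum for general (non-prime) $q_1$, which requires decomposition into prime-power parts and invoking the diagonalisation of $F$ over $\BZ/p^k\BZ$---permissible since $p\nmid 2\Delta_F$ for each $p\mid q_1$; (ii) the degenerate Salié case $q_1\mid F^*(\bc'')$, which demands a short separate computation matching the $u\equiv 0$ contribution on the right-hand side; and (iii) normalising the parametrisation of the roots cleanly so that $\e_{q_1}(u)$ in the final formula corresponds exactly to the exponential produced by the Salié formula---routine but bookkeeping-intensive, as carried out in detail in Proposition~2.1 of \cite{Huang}.
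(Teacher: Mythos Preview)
Your proposal is correct and follows the standard route---change of variables to remove the shift, evaluation of the inner three-variable quadratic Gauss sum, reduction of the outer $a$-sum to a Sali\'e sum, and then reparametrisation of the root condition into the form $G_{\bc,q_2}(u)\equiv 0$. Note that the paper does not actually supply its own proof of this lemma: it is simply stated with the citation ``cf.\ \cite{Huang} Proposition~2.1'', and your sketch is precisely the method carried out there, as you yourself note at the end.
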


The following is a uniform estimate for $\widetilde{S}_{q}^{(2)}(\bc)$.
\begin{lemma}[cf. \cite{Huang} Proposition 2.2]\label{leS2bd}
	We have, uniformly for all $q,N\in\BN$ and $\bc\in\BZ^3$,
	\begin{equation*}
		\widetilde{S}_{q}^{(2)}(\bc)\ll q_2^{\frac{5}{2}}.
	\end{equation*}
\end{lemma}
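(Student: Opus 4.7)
The plan is to reduce the estimation to local contributions at prime powers and then invoke standard bounds for quadratic exponential sums. First, by the Chinese Remainder Theorem applied to the modulus $q_2L$, the sum $\widetilde{S}_{q}^{(2)}(\bc)$ factors (up to tracking the $L$--part which is fixed) into local pieces indexed by the prime power components of $q_2$. Since $q_2\mid\Omega^\infty$ by the coprimality assumption $(q_1,q_2\Omega)=1$, only the finitely many primes dividing $\Omega=2L\Delta_F$ contribute, and writing $q_2=\prod_{p\mid\Omega}p^{k_p}$, it suffices to bound each local factor by $O(p^{5k_p/2})$ with an implicit constant depending only on $L$ and $F$.

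Fix such a prime $p$ and set $q_2=p^k$. Expanding
\[
F(Lq_1\bsigma_2+\blambda_N)=L^2q_1^2F(\bsigma_2)+Lq_1\,M_{\blambda_N}(\bsigma_2)+F(\blambda_N),
\]
with $M_{\blambda_N}$ a linear form in $\bsigma_2$, the inner sum for fixed $a_2$ coprime to $p^k$ becomes a quadratic exponential sum in three variables with quadratic part having matrix $a_2q_1^2F$ modulo $p^k$ and a linear twist depending on $\bc$, $\blambda_N$, $a_2$. The divisibility constraint $L^2\mid F(Lq_1\bsigma_2+\blambda_N)-m_0N$ translates, via $\gcd(q_1,L)=1$, into a linear congruence on $\bsigma_2\bmod L$ cutting out an affine subset of $(\BZ/L\BZ)^3$; this contributes only an $O_L(1)$ factor since $L$ is fixed. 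After this the quadratic Gauss sum in $\bsigma_2\bmod p^k$ admits the standard three--variable bound $O(p^{3k/2})$, obtained by diagonalising $F$ locally at $p$ (possibly after absorbing a bounded power of $p$ from $\Delta_F$ to handle degeneracy), completing the square in each variable, and applying the classical Gauss sum evaluation. Summing trivially over the $\phi(p^k)\leqslant p^k$ admissible values of $a_2$ then produces the local bound $O(p^{5k/2})$.

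The main technical obstacle is the possible degeneracy of $F$ modulo $p$ when $p\mid\Delta_F$, in which case the diagonalisation and completion--of--squares argument must be tracked carefully: one stratifies the sum according to the $p$--adic valuations of the partial derivatives and applies a $p$--adic analogue of stationary phase (or Hensel lifting) to isolate the stationary residues, at each stage losing no more than a bounded power of $p$ depending on $\operatorname{ord}_p(\Delta_F)$. Since $\operatorname{ord}_p(\Delta_F)$ is bounded independently of $k$, these bad--prime losses are absorbed into the implicit constant, and the resulting uniform bound $\widetilde{S}_q^{(2)}(\bc)\ll q_2^{5/2}$ is precisely what is proved as Proposition~2.2 in \cite{Huang}, whose argument we simply cite.
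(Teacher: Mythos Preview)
The paper itself offers no proof of this lemma beyond the citation to \cite{Huang}, so your proposal --- which sketches the standard CRT-plus-Gauss-sum argument and then likewise defers to \cite{Huang} --- is in the same spirit and the outline is essentially sound.

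One correction to your sketch: the coprimality condition $\gcd(q_1,q_2\Omega)=1$ does \emph{not} force $q_2\mid\Omega^\infty$. It only says that every prime of $\Omega$ dividing $q$ must land in $q_2$; nothing prevents $q_2$ from also containing primes away from $\Omega$ (e.g.\ one may take $q_1=1$, $q_2=q$). So your assertion that ``only the finitely many primes dividing $\Omega$ contribute'' is false as written. Fortunately this does not damage the argument: for any prime $p\nmid 2\Delta_F L$ the local quadratic sum in three variables is evaluated exactly after diagonalising and completing the square, yielding modulus $p^{3k/2}$, and the trivial sum over $a_2$ gives the clean local bound $p^{5k/2}$ with implied constant $1$. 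The losses you describe occur only at the finitely many primes dividing $2\Delta_F L$, independently of $q_2$, and are absorbed into the $O$-constant exactly as you say. With this repaired, your sketch matches the intended argument.
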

	We also have the following results on averaging $\widetilde{S}_{q}(\bc)$ over $q$.
	\begin{lemma}[cf. \cite{Huang} Proposition 2.3]\label{le:Kloosterman}
		For any fixed $\kappa\geqslant 0$, uniformly for any $\bc\in\BZ^3$, we have
			\begin{equation*}
			\sum_{\substack{q\leqslant X\\q_{N\Omega}>X^\kappa}}\frac{\left|\widetilde{S}_{q}(\bc)\right|}{q^{2}}\ll_\varepsilon\Upsilon_{\frac{1}{4}}(N)X^{1-\frac{1}{4}\kappa+\varepsilon}.
		\end{equation*}
	\end{lemma}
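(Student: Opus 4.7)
My plan is to combine the multiplicative decomposition $\widetilde{S}_q(\bc)=\widetilde{S}_q^{(1)}(\bc)\widetilde{S}_q^{(2)}(\bc)$ with a Rankin-type peeling argument directed at the $N\Omega$-part of $q$. To set things up, I would write $q=q_1q_2$ with $q_2:=q_{N\Omega}$ and $q_1:=q/q_2$, so that the coprimality $(q_1,q_2\Omega)=1$ (in fact $(q_1,N\Omega)=1$) licensing Lemmas \ref{le:S1} and \ref{leS2bd} holds automatically; the hypothesis $q_{N\Omega}>X^\kappa$ then reads $q_2>X^\kappa$, pinpointing the bad factor of $q$ whose rarity we must exploit.

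The next step is the pointwise estimate
\begin{equation*}
\frac{|\widetilde{S}_q(\bc)|}{q^2}\ll q_1^\varepsilon\, q_2^{1/2}.
\end{equation*}
Lemma \ref{leS2bd} furnishes $|\widetilde{S}_q^{(2)}(\bc)|\ll q_2^{5/2}$ directly. For $\widetilde{S}_q^{(1)}(\bc)$, Lemma \ref{le:S1} expresses it as $q_1^2$ times a sum over the residues $u\bmod q_1$ with $G_{\bc,q_2}(u)=(\Delta_F u)^2-(q_2L^2)^2 m_0 N\Delta_F F^*(\bc)\equiv 0$. Since $(q_1,\Delta_F)=1$, these $u$ are square roots of a fixed residue modulo $q_1$, and the divisor bound pins their number at $\ll q_1^\varepsilon$. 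The finitely many primes dividing $m_0$ but not $N\Omega$, where Lemma \ref{le:S1} does not literally apply, yield only an $O(1)$ multiplicative factor absorbed into the implicit constant.

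Once this is in hand the conclusion follows by Rankin's trick. Summing trivially over $q_1\leqslant X/q_2$ coprime to $N\Omega$ gives an upper bound of
\begin{equation*}
X^{1+\varepsilon}\sum_{\substack{q_2>X^\kappa\\ q_2\mid(N\Omega)^\infty}}q_2^{-1/2-\varepsilon}.
\end{equation*}
Splitting $q_2^{-1/2}=q_2^{-1/4}\cdot q_2^{-1/4}$ and bounding one factor by $X^{-\kappa/4}$ using $q_2>X^\kappa$ extracts the desired saving; the residual sum completes to the Euler product $\prod_{p\mid N\Omega}(1-p^{-1/4-\varepsilon})^{-1}$, which factorises as $\Upsilon_{1/4}(N)$ times a bounded contribution from the fixed modulus $\Omega$. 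This yields the claimed bound. The main technical juncture is the root-counting step for $G_{\bc,q_2}$ modulo the composite $q_1$; beyond this, the argument is bookkeeping, and the exponent $1/4$ is forced by balancing the $q_2^{1/2}$ size of the bad factor against the convergence threshold of the resulting Euler product.
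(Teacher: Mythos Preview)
The paper does not supply its own proof here, deferring entirely to \cite{Huang} Proposition~2.3. Your overall strategy---factor $q=q_1q_2$ along the $N\Omega$-part, combine the pointwise bounds from Lemmas~\ref{le:S1} and~\ref{leS2bd} to obtain $|\widetilde{S}_q(\bc)|/q^2\ll q_1^\varepsilon q_2^{1/2}$, then peel off $q_2>X^\kappa$ via Rankin---is the natural route and yields the right exponent.

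There is, however, a genuine gap at what you yourself flag as the main juncture. You assert that the roots of $u^2\equiv D\bmod q_1$ (with $D$ the residue determined by $\bc,q_2,N$) number $\ll q_1^\varepsilon$ by the divisor bound. This is only valid when $(D,q_1)=1$. In general the root count is of order $2^{\omega(q_1)}\sqrt{\gcd(D,q_1)}$, and when $F^*(\bc)=0$---which occurs for $\bc=\boldsymbol{0}$ and possibly for other $\bc$ if $F^*$ is isotropic---one has $D\equiv 0$ and the count is $\prod_{p^k\|q_1}p^{\lfloor k/2\rfloor}$, which is \emph{not} $O(q_1^\varepsilon)$. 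Since the lemma is asserted uniformly in $\bc$, this breaks your bound $|\widetilde{S}_q^{(1)}(\bc)|\ll q_1^{2+\varepsilon}$ as written.

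The repair is to bound the exponential sum in Lemma~\ref{le:S1} rather than the bare root count. A prime-power computation shows that for odd $p\nmid\Delta_F$ and any $k\ge 1$ one has
\[
\Bigl|\sum_{\substack{u\bmod p^k\\ G_{\bc,q_2}(u)\equiv 0\bmod p^k}}\e_{p^k}(u)\Bigr|\le 2
\]
uniformly: whenever $p$ divides the constant term of $G_{\bc,q_2}$ the excess roots fill out complete residue classes modulo a nontrivial power of $p$, and their additive characters cancel (this is precisely the identity $\sum_{u^2\equiv 0}\e_{q_1}(u)=\mu^2(q_1)$ used later in the paper). Multiplicativity then gives $\bigl|\sum_u\e_{q_1}(u)\bigr|\le 2^{\omega(q_1)}\ll q_1^\varepsilon$, after which your argument proceeds unchanged. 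Your handling of the primes dividing $m_0$ but not $N\Omega$ is also informal; the cleanest fix is to set $q_2=q_{m_0N\Omega}$ from the start so that Lemma~\ref{le:S1} applies verbatim to $q_1$, noting that $q_{N\Omega}>X^\kappa$ forces $q_2>X^\kappa$ and that $\Upsilon_{1/4}(m_0\Omega)=O(1)$.
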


We define for $\bc\in\BZ^3,l,x\in\BN$ with $\gcd(x,lL)=1$, \begin{equation}\label{eq:CSq2}
	\CS_{l}(x;\bc):=\sum_{\substack{a\bmod l\\(a,l)=1}}\sum_{\substack{\bbeta\bmod lL^2\\F(\bbeta)\equiv m_0N\bmod L^2\\ \bbeta\equiv \blambda_N\bmod L}}\e_{lL^2}\left(a_2 (F(\bbeta)-m_0N)+\overline{x}\bc\cdot\bbeta\right),
\end{equation}  
For each character $\chi\bmod lL^2$, we also define \begin{equation}\label{eq:CA}
	\CA_{l}(\chi;\bc):=\frac{1}{\phi(lL^2)}\sum_{x\bmod lL^2}\chi(x)^c \CS_{l}(x;\bc).
\end{equation}
Then by orthogonality, \begin{equation}\label{eq:CSCA}
	\CS_{l}(x;\bc)=\sum_{\chi\bmod lL^2}\chi(x)\CA_{l}(\chi;\bc).
\end{equation}
The following estimate is better than the one given by Lemma \ref{leS2bd} when $\chi$ is primitive.
\begin{lemma}\label{le:CAbd}
	We have, uniformly for all $\bc,l,N\chi$, 
	$$\CA_{l}(\chi;\bc)\ll_{\varepsilon} \frac{l^{\frac{43}{16}+\varepsilon}}{|\chi_*|^{\frac{1}{4}}},$$ where  $|\chi_*|$ is the modulus of the primitive character $\chi_*$ inducing $\chi$.
\end{lemma}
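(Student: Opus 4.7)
The plan is to reduce $\CA_l(\chi;\bc)$ to a twisted quadratic exponential sum and extract cancellation from both the vanishing properties of generalised Gauss sums and the quadratic nature of $F$.

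First, substituting $y=\bar x$ in \eqref{eq:CA} and interchanging summations (since $\bar\chi(x)=\chi(y)$) yields
\begin{equation*}
\CA_l(\chi;\bc)=\frac{1}{\phi(lL^2)}\sum_{\substack{a\bmod l\\(a,l)=1}}\sum_{\substack{\bbeta\bmod lL^2\\F(\bbeta)\equiv m_0N\bmod L^2\\\bbeta\equiv \blambda_N\bmod L}}\e_{lL^2}(a(F(\bbeta)-m_0N))\,\tau_{lL^2}(\chi,\bc\cdot\bbeta),
\end{equation*}
where $\tau_{lL^2}(\chi,n):=\sum_{y\bmod lL^2,\,(y,lL)=1}\chi(y)\e_{lL^2}(yn)$ is a generalised Gauss sum. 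I would then invoke the classical evaluation: writing $M=|\chi_*|$ and $R=lL^2/M$, the sum $\tau_{lL^2}(\chi,n)$ vanishes unless $R\mid n$, and otherwise satisfies $|\tau_{lL^2}(\chi,n)|\ll_\varepsilon M^{1/2}R^{\varepsilon}$. This collapses the $\bbeta$-sum to those residues with $\bc\cdot\bbeta\equiv 0\pmod R$.

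Next, the inner sum over $a$ can be evaluated explicitly as a Ramanujan sum $c_l(K(\bbeta))$, where $K(\bbeta):=(F(\bbeta)-m_0N)/L^2\in\BZ$, using $|c_l(K)|\leq (l,K)$. I would then partition dyadically over $d=(l,K(\bbeta))$, so the task becomes that of bounding
\begin{equation*}
N(d,R):=\#\left\{\bbeta\bmod lL^2:\ d\mid K(\bbeta),\ R\mid\bc\cdot\bbeta,\ \bbeta\equiv\blambda_N\bmod L\right\}.
\end{equation*}
Two ingredients enter here: (i) a density estimate via local-to-global factorisation, giving a ``trivial'' contribution of order $M\cdot l^2/d\cdot L^3$ after taking into account the $R$-linear slice and the $d$-quadratic slice; (ii) a Weil-type bound for the twisted quadratic exponential sum that remains once one does \emph{not} merely count but retains the oscillation $\e_{lL^2}(a(F(\bbeta)-m_0N))$ restricted to the affine hyperplane $\bc\cdot\bbeta\equiv 0\bmod R$. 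By completing the square in $\bbeta$ modulo $l/d$ on this hyperplane one obtains a Salié-type sum in one fewer variable, which gives square-root cancellation in that variable; combined with the Gauss-sum factor $M^{1/2}$ and the prefactor $1/\phi(lL^2)$, this produces an estimate of the shape $l^{43/16+\varepsilon}M^{-1/4}$ after the parameters are optimised.

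The main obstacle will be Step (ii): the twisted quadratic sum on the hyperplane $\bc\cdot\bbeta\equiv 0\pmod R$ is genuinely a sum to modulus $lL^2$ of mixed multiplicative-additive type, and bounding it with a \emph{clean} square-root cancellation requires either an explicit reduction to a $1$- or $2$-variable complete character sum evaluable via Weil's bound, or a direct appeal to the prime-power Gauss/Salié sum estimates already employed in \cite{Huang}. The unusual exponent $43/16$ is expected to emerge from the interpolation between the trivial pointwise bound $l^{5/2}$ of Lemma \ref{leS2bd} and the sharper bound one obtains only when the conductor $M$ is large enough to render the Gauss-sum vanishing informative; executing this interpolation cleanly at every prime dividing $l$, including the wildly ramified primes where $\chi$ has high $p$-adic conductor, is where the technical work lies.
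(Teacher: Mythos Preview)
The paper does not supply a proof of this lemma; it is stated as an input, with the surrounding text pointing to the companion article \cite{Huang} (note the reference to \cite[(2.22)]{Huang} immediately afterwards). There is therefore no in-paper argument against which to compare your proposal.

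As for the proposal itself: the opening reduction---inverting $x\mapsto \bar x$, pulling out a generalised Gauss sum $\tau_{lL^2}(\chi,\bc\cdot\bbeta)$, and invoking its vanishing unless the conductor quotient $R$ divides $\bc\cdot\bbeta$---is sound and is the natural first move. From that point on, however, the sketch stops being a proof. You yourself identify Step~(ii) as the obstacle, and that is accurate: the passage from ``complete the square on the hyperplane $\bc\cdot\bbeta\equiv 0\pmod R$ and appeal to Weil/Sali\'e'' to an actual bound is precisely where the work lies, and nothing in the proposal carries it out. In particular, the exponent $\tfrac{43}{16}$ does not fall out of the heuristic you describe. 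Observe that $\tfrac{43}{16}>\tfrac{5}{2}$, so the lemma only beats the trivial bound of Lemma~\ref{leS2bd} once $|\chi_*|$ is a genuine power of $l$; the specific value $\tfrac{43}{16}$ (as opposed to $\tfrac{5}{2}$ or $\tfrac{39}{16}$) must come from a concrete prime-by-prime computation, especially at primes where $\chi$ has large $p$-adic conductor, and not from a generic ``interpolation''. Until that computation is written down, what you have is an outline of plausible shape rather than a proof.
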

The relation \eqref{eq:CSCA} and the computation done in \cite[(2.22)]{Huang} show that
\begin{equation}\label{eq:SCS}
	\begin{split}
			\widetilde{S}_{q}^{(2)}(\bc)&=\e_{q_2L^2}\left(-\overline{q_1}\bc\cdot\blambda_N\right) \CS_{q_2}(q_1;\bc)\\ &=\e_{q_2L^2}\left(-\overline{q_1}\bc\cdot\blambda_N\right)\sum_{\chi\bmod q_2L^2}\chi(x)\CA_{q_2}(\chi;\bc).
	\end{split}
\end{equation}  
\subsection{Oscillatory integrals}
The change of variable $\bt:=\frac{L\bs+\blambda_N}{\sqrt{N}}$ yields
	$$\widetilde{I}_{q}(\bc)= \left(\frac{\sqrt{N}}{L}\right)^3 \e_{qL^2}(\bc\cdot\blambda_N)\widetilde{\CI}_{\frac{q}{Q}}\left(w;\frac{\bc}{L}\right),$$ where for any $r>0$ we define
\begin{equation}\label{eq:CI}
\widetilde{\CI}_r(w;\bb):=\int_{\BR^3}w(\bt)h\left(r,F(\bt)-m_0\right)\e_r\left(-\bb\cdot\bt\right)\operatorname{d}\bt.
\end{equation}
The following gathers useful estimates for $\widetilde{\CI}_r(w;\bb)$.
\begin{lemma}\label{le:easyhardest}
	Uniformly for all $\bb$ and $r$, $$\widetilde{\CI}_{r}(w;\bb)\ll 1\quad \text{``Trivial estimate''}.$$
	If moreover $\bb\neq \boldsymbol{0}$, we have
	$$\widetilde{\CI}_{r}(w;\bb) \begin{cases}
		\ll_{A}  r^{-1}|\bb|^{-A} \text{ for all } A>1;  & \quad \text{``Simple estimate''},\\
		\ll_{\varepsilon} \left(\frac{r}{|\bb|}\right)^{\frac{1}{2}-\varepsilon}; & \quad \text{``Harder estimate''}.
	\end{cases}$$
If $\bb=\boldsymbol{0}$ we have 	$$r\frac{\partial \widetilde{\CI}_{r}(w;\boldsymbol{0})}{\partial r}\ll 1.$$
\end{lemma}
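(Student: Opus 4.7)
Each of the four bounds is derived from standard analytic properties of the function $h \in \CH$ catalogued in \cite[Lemma 4]{H-Bdelta}. In particular, $h(r, y) \ll 1$ uniformly, the effective $y$-support of $h(r,\cdot)$ is $|y| \lesssim r$ with rapid polynomial decay outside, and $\partial_r^j \partial_y^k h(r,y) \ll_{j,k} r^{-1-j-k}$.

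The trivial estimate is immediate from $|h| \ll 1$ and the compactness of $\operatorname{Supp}(w)$. For the simple estimate with $\bb \neq \boldsymbol{0}$, I would iteratively integrate by parts in a direction parallel to $\bb$: each of the $A$ integrations against the oscillating factor $\e_r(-\bb \cdot \bt)$ produces a factor $r/|\bb|$ and transfers a derivative onto $w(\bt)\, h(r, F(\bt) - m_0)$. Using $\partial_y^k h(r, y) \ll_k r^{-1-k}$, the worst case (all $A$ derivatives falling on $h$) contributes a factor $r^{-1-A}$, and we obtain
\[
\widetilde{\CI}_r(w;\bb) \ll_A \left(\frac{r}{|\bb|}\right)^{\!A}\! r^{-1-A} = r^{-1}|\bb|^{-A},
\]
as required.

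The harder estimate is the principal technical point. I would apply the co-area formula to disintegrate the integral along level sets of $F$:
\[
\widetilde{\CI}_r(w;\bb) = \int_\BR h(r, u)\, \Psi\!\left(u;\frac{\bb}{r}\right) du, \qquad \Psi(u; \bxi) := \int_{\{F = m_0 + u\}} w(\bt)\, \exp(-2\pi i\, \bxi\cdot\bt)\, \frac{d\sigma(\bt)}{|\nabla F(\bt)|}.
\]
For $u$ in a bounded neighbourhood of $0$, the quadric $\{F = m_0 + u\} \cap \operatorname{Supp}(w)$ is a smooth compact two-dimensional surface whose Gaussian curvature is non-vanishing, thanks to the non-degeneracy of $F$ and $m_0 + u \neq 0$. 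The classical stationary-phase/Van der Corput estimate for oscillatory integrals on surfaces of non-vanishing Gaussian curvature then yields $|\Psi(u; \bxi)| \ll_\varepsilon |\bxi|^{-1/2+\varepsilon}$, uniformly in $u$. Combined with the bound $\int |h(r, u)|\,du \ll 1$ (a consequence of the defining properties of $\CH$), this delivers $\widetilde{\CI}_r(w;\bb) \ll_\varepsilon (r/|\bb|)^{1/2-\varepsilon}$.

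Finally, for the derivative at $\bb = \boldsymbol{0}$, I would differentiate under the integral and invoke the co-area formula again, reducing to bounding $\partial_r \widetilde{\CI}_r(w;\boldsymbol{0}) = \int \partial_r h(r, u)\, \Phi(u)\,du$ for a smooth bounded density $\Phi$. The pointwise estimate $|\partial_r h(r, u)| \ll r^{-2}$ together with the effective $u$-support of size $\lesssim r$ (with rapid decay beyond) yields $\int |\partial_r h(r, u)|\,du \ll r^{-1}$, whence $r\, \partial_r \widetilde{\CI}_r(w;\boldsymbol{0}) \ll 1$. The main obstacle in the plan is the harder estimate, where one must verify carefully the non-vanishing of the Gaussian curvature of $\operatorname{Supp}(w) \cap \{F = m_0 + u\}$ and then cite (or reprove) the sharp surface oscillatory-integral bound.
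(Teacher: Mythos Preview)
The paper does not prove this lemma; it quotes the estimates from Heath-Brown's toolkit \cite{H-Bdelta} (essentially Lemmas~4, 5, 19 and 22 there). Your arguments for the trivial, simple, and $\bb=\boldsymbol{0}$ derivative bounds run along the same lines and are correct. For the harder estimate, however, your co-area route differs from Heath-Brown's: he instead Fourier-inverts $h$ in its second slot, $h(r,G(\bt))=\int_\BR P_r(t)\,\e(tG(\bt))\,\mathrm{d}t$, so the $\bt$-integral acquires the phase $tF(\bt)-\bu_r\cdot\bt$ (the constant $-tm_0$ is harmless). Non-degeneracy of $F$ makes the Hessian determinant $\asymp|t|^3$, quadratic stationary phase gives the inner integral $\ll|t|^{-3/2}$ uniformly, and together with $\int|P_r|\ll r^{-1}$ and the localisation $|t|\asymp|\bu_r|$ one recovers $(r/|\bb|)^{1/2}$ with no level-set geometry needed. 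This is exactly the calculation the paper later unfolds in \S\ref{se:ordc}.

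Your co-area argument has a gap as written. The claim that $\{F=m_0+u\}\cap\operatorname{Supp}(w)$ has non-vanishing Gaussian curvature breaks down for $u$ near $-m_0$: the limiting surface is the cone $\{F=0\}$, a ruled surface with one principal curvature identically zero, and nothing in the hypotheses keeps $u$ away from $-m_0$ (for $r$ bounded away from $0$ the effective $u$-support of $h(r,\cdot)$ is not small). The repair is that you only need $|\Psi(u;\bxi)|\ll|\bxi|^{-1/2}$, and this follows from a \emph{single} non-vanishing principal curvature; on any non-degenerate affine quadric, away from the apex, at least one principal curvature is uniformly bounded below on compacta. Invoking that weaker geometric input (rather than full Gaussian curvature, which would over-deliver $|\bxi|^{-1}$ where it holds) salvages your approach.
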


	In contrast to the case treated in \cite{Huang} (which is close to being homogeneous), the estimates for the partial derivative $r\frac{\partial \widetilde{\CI}_{r}(w;\bb)}{\partial r}$ when $\bb\neq\boldsymbol{0}$ is not as good as above. On abbreviating $G(\bt):=F(\bt)-m_0$ and on recycling the integral-by-parts manipulation carried out in \cite[p.182]{H-Bdelta}, there exist functions $h_1,h_2$ of class $\CH$ and appropriate weight functions $w_1,w_2: \BR^3\to\BR$ (defined by \cite[(7.1)]{H-Bdelta}) such that
	$$\frac{\partial\widetilde{\CI}_{r}(w;\bb)}{\partial r}=\int_{\BR^3}\left( r^{-2}h_1\left(r,G(\bt)\right)w_1(\bt)+m_0r^{-3}h_2\left(r,G(\bt)\right)w_2(\bt)\right)\e_r\left(-\bb\cdot\bt\right)\operatorname{d}\bt.$$
	The factor $m_0$ being non-zero, we obtain
	\begin{lemma}\label{le:easyhardpartial}
		If  $\bb\neq \boldsymbol{0}$, we have
			$$r\frac{\partial\widetilde{\CI}_{r}(w;\bb)}{\partial r} \begin{cases}
			\ll_{A}  r^{-2}|\bb|^{-A} \text{ for all } A>1;  & \quad \text{``Simple estimate''},\\
			\ll_{\varepsilon} r^{-1}\left(\frac{r}{|\bb|}\right)^{\frac{1}{2}-\varepsilon}; & \quad \text{``Harder estimate''}.
		\end{cases}$$
	\end{lemma}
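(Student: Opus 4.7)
The starting point is the decomposition of $\partial_r\widetilde{\CI}_r(w;\bb)$ displayed immediately before the statement, obtained from Heath-Brown's integration by parts in the $r$-variable. Multiplied through by $r$ it reads
$$r\frac{\partial\widetilde{\CI}_r(w;\bb)}{\partial r}=r^{-1}\CJ_1(r;\bb)+m_0\,r^{-2}\CJ_2(r;\bb),$$
where $\CJ_i(r;\bb):=\int_{\BR^3}w_i(\bt)h_i(r,G(\bt))\e_r(-\bb\cdot\bt)\operatorname{d}\bt$. The crucial observation is that each $\CJ_i$ has exactly the same analytic form as $\widetilde{\CI}_r(\cdot;\bb)$: the weight $w$ is replaced by the smooth, compactly supported $w_i$ constructed as in \cite[(7.1)]{H-Bdelta}, and the kernel $h$ is replaced by $h_i$, which remains a function of class $\CH$ by Heath-Brown's classification.

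The proof therefore reduces to applying Lemma \ref{le:easyhardest} to each $\CJ_i$. Its simple estimate supplies $\CJ_i\ll_A r^{-1}|\bb|^{-A}$ and its harder estimate supplies $\CJ_i\ll_\varepsilon(r/|\bb|)^{1/2-\varepsilon}$. Inserting these into the decomposition, the summand $r^{-1}\CJ_1$ contributes the announced bounds $r^{-2}|\bb|^{-A}$ and $r^{-1}(r/|\bb|)^{1/2-\varepsilon}$ verbatim; the summand $m_0r^{-2}\CJ_2$ contributes a comparable quantity, with the fixed non-zero integer $m_0$ playing no analytic role, and in the parameter regimes in which these estimates are invoked in later sections it is absorbed into the leading term.

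The principal, and in my view only substantive, obstacle is the structural verification that $h_1,h_2$ remain of class $\CH$ and that $w_1,w_2$ remain smooth and compactly supported under the derivation of the displayed identity. This is precisely the content of the manipulation on \cite[p.~182]{H-Bdelta} together with the construction in \cite[\S7]{H-Bdelta}. Once it is in hand, the two bounds drop out of Lemma \ref{le:easyhardest} applied to each summand. The emergence of the extra factor $r^{-1}$ compared with the analogous bounds for $\widetilde{\CI}_r$ itself is forced precisely by the prefactor $r^{-2}$ attached to the second summand $\CJ_2$, which in turn arises because $m_0\neq 0$ and so the inhomogeneous term of $G(\bt)=F(\bt)-m_0$ cannot be integrated away; this is the source of the loss relative to the homogeneous case analysed in \cite{Huang}.
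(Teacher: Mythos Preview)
Your argument is exactly the paper's own: from the displayed Heath-Brown decomposition $\partial_r\widetilde{\CI}_r=r^{-2}\CJ_1+m_0r^{-3}\CJ_2$ (with $h_1,h_2$ of class $\CH$ and $w_1,w_2$ smooth compactly supported) one feeds each $\CJ_i$ into Lemma~\ref{le:easyhardest}, and the paper's terse ``we obtain'' is precisely your termwise application of that lemma. Your caveat about the second summand is well taken: read literally, Lemma~\ref{le:easyhardest} yields $m_0r^{-2}\CJ_2\ll r^{-3}|\bb|^{-A}$ and $\ll r^{-2}(r/|\bb|)^{1/2-\varepsilon}$, i.e.\ an extra factor $r^{-1}$ beyond the stated bounds --- a point the paper passes over in silence --- but as you correctly note this is harmless in every later use, where $r$ ranges over an interval of the form $[Q^{-\delta},c]$ (or $[(\log Q)^{-\varepsilon_0},c]$) and the surplus power of $r^{-1}$ is absorbed into the final exponent.
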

Comparing with Lemma \ref{le:easyhardest}, we see that an extra $r^{-1}$ appears, which can be problematic when $r$ is close to zero.

\section{Counting $p_0$-integral points}\label{se:counting}
We assume from now on \eqref{eq:Np0}. The main theorem of this section the following sum over non-zero Poisson variables $\bc$.
\begin{theorem}\label{thm:cneq0sum}
	There exists $\CK_h\in\BC$ such that 
	$$\sum_{\substack{\bc\in\BZ^3\setminus\{\boldsymbol{0}\}}}\sum_{q\ll Q}\frac{\widetilde{S}_{q}(\bc)\widetilde{I}_{q}(w;\bc)}{(qL)^3}=N^\frac{3}{2}\CK_h+O\left(\frac{N^\frac{3}{2}}{{(\log N)^{\frac{1}{4}(1-\frac{\sqrt{2}}{2})-A_0}}}\right),$$ for a certain effectively computable numerical constant $0<A_0<\frac{1}{4}(1-\frac{\sqrt{2}}{2})$.
\end{theorem}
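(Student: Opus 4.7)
The plan is to plug the factorisation $\widetilde{S}_{q}(\bc)=\widetilde{S}_{q}^{(1)}(\bc)\widetilde{S}_{q}^{(2)}(\bc)$ and the identity $\widetilde{I}_{q}(w;\bc)=(\sqrt{N}/L)^{3}\e_{qL^{2}}(\bc\cdot\blambda_N)\widetilde{\CI}_{q/Q}(w;\bc/L)$ into the displayed double sum and to execute it by organising $(q,\bc)$ according to (i) the size of the $N\Omega$-part $q_{N\Omega}$ of $q$ and (ii) whether the Poisson vector $\bc$ is \emph{exceptional} (meaning $F^{*}(\bc)=0$) or \emph{ordinary} (meaning $F^{*}(\bc)\ne0$). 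The arithmetic shape $N=p_{0}^{2h}$ is crucial: $q_{N\Omega}$ reduces to $q_{p_{0}\Omega}$, so the ``bad'' divisors of $q$ are confined to a fixed set of primes, which is what allows a clean extraction of $\CK_{h}$.

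First I would apply a smooth dyadic partition of unity to $q\in[1,Q]$ and, within each dyadic range $q\sim X$, split according to the threshold $q_{N\Omega}\lessgtr X^{\kappa}$ for a small parameter $\kappa>0$ to be optimised at the end. In the ``special'' range $q_{N\Omega}>X^{\kappa}$ I would estimate $\widetilde{\CI}_{q/Q}(w;\bc/L)$ by the simple/harder bounds of Lemma \ref{le:easyhardest}, which truncate the $\bc$-sum to $|\bc|\ll Q\cdot(\text{polylog})$ while saving a power of $Q/|\bc|$; then Lemma \ref{le:Kloosterman} immediately disposes of the $q$-sum, contributing an acceptable error of shape $N^{3/2}X^{-\kappa/4+\varepsilon}$.

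In the complementary ``generic'' range $q_{N\Omega}\leqslant X^{\kappa}$ I would substitute the explicit evaluation of Lemma \ref{le:S1} for $\widetilde{S}_{q}^{(1)}(\bc)$, turning the $a_{1}$-part of the $q_{1}$-sum into a Sali\'e-type sum over the roots $u\bmod q_{1}$ of $G_{\bc,q_{2}}$, and the character decomposition \eqref{eq:SCS} of $\widetilde{S}_{q}^{(2)}(\bc)$. The $\bc$-sum now splits into two contributions. For the exceptional vectors $F^{*}(\bc)=0$ the polynomial $G_{\bc,q_{2}}$ degenerates to $(\Delta_{F}T)^{2}$, the unique root is $u=0$, and the Sali\'e sum collapses to a clean Ramanujan-like expression whose $q$-average produces, after Mellin inversion against $\widetilde{\CI}_{q/Q}(w;\bc/L)$ as in the blueprint of \cite{Huang}, precisely the main term $N^{3/2}\CK_{h}$. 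For the ordinary vectors $F^{*}(\bc)\ne0$ the Sali\'e sum enjoys square-root cancellation when twisted by $\e_{q_{1}}(u)$, and combining this with Lemma \ref{le:CAbd} and the harder estimate of Lemma \ref{le:easyhardest} shows that the contribution is $O(N^{3/2}(\log N)^{-A_{0}})$, provided $\kappa$ is chosen sufficiently small.

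The main obstacle is the treatment of the oscillatory integrals in the generic range. The factor $m_{0}\ne0$ makes $\widetilde{\CI}_{r}(w;\bb)$ inhomogeneous, so the harder estimate is borderline and Lemma \ref{le:easyhardpartial} shows that $r\,\partial_{r}\widetilde{\CI}_{r}(w;\bb)$ picks up an extra $r^{-1}$ relative to \cite{Huang}. This is problematic because pulling the Mellin/Poisson step through the $q$-sum forces an Abel rearrangement against $\widetilde{S}_{q}(\bc)$, and the resulting derivative bound is ruinous for small $q$. I would handle this by interpolating between the simple and harder estimates of Lemma \ref{le:easyhardpartial} and by exploiting the restriction $q_{N\Omega}\leqslant X^{\kappa}$ to control the density of ``small'' $q$; optimising the interpolation exponent jointly with the Kloosterman saving of Lemma \ref{le:Kloosterman} is exactly what produces the exponent $\tfrac{1}{4}(1-\tfrac{\sqrt{2}}{2})-A_{0}$ in the stated error term.
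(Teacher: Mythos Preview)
Your dichotomy between ``exceptional'' and ``ordinary'' Poisson vectors is misplaced, and this is a genuine gap rather than a cosmetic difference. You take exceptional to mean $F^{*}(\bc)=0$, but the correct criterion is that $m_{0}\Delta_{F}F^{*}(\bc)$ be a perfect square (equivalently, that the quadratic $G_{\bc,l}(T)$ in \eqref{eq:polyG} be reducible). With $N=p_{0}^{2h}$ the discriminant of $G_{\bc,l}$ is $p_{0}^{2h}(lL^{2})^{2}m_{0}\Delta_{F}F^{*}(\bc)$, so reducibility is governed by $m_{0}\Delta_{F}F^{*}(\bc)$ alone. When this quantity is a nonzero square (the paper's ``Type~I'' exceptional case), the root sum $\sum_{u}\e_{q_{1}}(u)$ becomes $2\cos(2\pi b_{0}/(a_{0}q_{1}))$ with $b_{0}$ of size roughly $p_{0}^{h}|\bc|$; this has no cancellation whatsoever on its own, and the $q_{1}$-average produces a genuine main term $\widetilde{\eta}_{h}(\bc)$ (see Theorem~\ref{thm:exec} and \eqref{eq:etac}). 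These vectors therefore contribute to $\CK_{h}$, not to the error. Your scheme places them in the ordinary bucket and asserts square-root cancellation, which is simply false for them.

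Two further points. First, even for the genuinely ordinary $\bc$ (those with $m_{0}\Delta_{F}F^{*}(\bc)\neq\square$) the saving is not ``square-root cancellation'' but only a logarithmic saving of Hooley type for sums over roots of an irreducible binary quadratic; this is precisely where the exponent $\tfrac{1}{4}(1-\tfrac{\sqrt{2}}{2})$ originates (Lemma~\ref{le:sumSalie2} and Proposition~\ref{prop:sumSalie}), not from optimising against the Kloosterman bound of Lemma~\ref{le:Kloosterman}. Second, your proposed handling of the small-$q$ range by ``interpolating between the simple and harder estimates of Lemma~\ref{le:easyhardpartial}'' does not work: as Remark~\ref{rmk:ordc} explains, naive partial summation against $\partial_{r}\widetilde{\CI}_{r}$ leads to a divergent integral near $r=0$. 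The paper instead opens up $\widetilde{\CI}_{r}$ via the Fourier representation \eqref{eq:Prt}, runs a stationary-phase decomposition into good and bad pairs $(\by,t)$ following \cite[\S8]{H-Bdelta}, and crucially puts the oscillatory factor $\e_{q/Q}(-\bc\cdot\bt/L)$ \emph{inside} the averaged Sali\'e sum $\CF_{\bt,\bc}$ before applying Abel summation, so that the $\partial_{r}$ lands on $P_{r}(t)/r$ rather than on the oscillatory integral itself.
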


For $\bc=\boldsymbol{0}$ we have
\begin{proposition}\label{prop:c=0}
	 The sum $\sum_{q=1}^\infty\frac{\widetilde{S}_{q}(\boldsymbol{0})\widetilde{I}_{q}(w;\boldsymbol{0})}{(qL)^3}$ equals
	$$\begin{cases}
		\left(\frac{\sqrt{N}}{L^2}\right)^3\left(\widetilde{\CI}(w)L^4\widetilde{\mathfrak{S}}_F(p_0;L,\blambda)\log (\sqrt{N})+a_0\right)+O_\varepsilon(N^{\frac{5}{4}+\varepsilon}) &\text{ if } -m\Delta_F=\square;\\
		\left(\frac{\sqrt{N}}{L^2}\right)^3\left(\widetilde{\CI}(w)L^4\BL(1,\psi_0)\widetilde{\mathfrak{S}}_F(p_0;L,\blambda)\right)+O_\varepsilon(N^{\frac{5}{4}+\varepsilon}) &\text{ if } -m\Delta_F\neq\square;
	\end{cases}$$ for a certain $a_0=O(1)$.
\end{proposition}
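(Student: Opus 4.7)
The plan is to factor $q=q_1q_2$ where $q_2$ is supported on primes dividing $\Omega p_0$, so that $\gcd(q_1,\Omega N)=1$ (since $N=p_0^{2h}$). At $\bc=\boldsymbol{0}$ the polynomial in Lemma~\ref{le:S1} becomes $G_{\boldsymbol{0},q_2}(T)=\Delta_F^{2}T^{2}$; its $u$-sum reduces to $\sum_{u\bmod q_1,\,q_1\mid u^2}\e_{q_1}(u)$. By Chinese remainders this is multiplicative, with local factor $1$ if $p\Vert q_1$ and $0$ if $p^{2}\mid q_1$, hence it equals $\mu^2(q_1)$. Combined with $\big(\tfrac{-m_0N\Delta_F}{q_1}\big)=\psi_0(q_1)$, valid because $N$ is a perfect square and $\gcd(q_1,p_0)=1$, Lemma~\ref{le:S1} collapses to
\[
\widetilde{S}_q^{(1)}(\boldsymbol{0})=\mu^2(q_1)\psi_0(q_1)q_1^2.
\]
Inserting this and the identity $\widetilde{I}_q(\boldsymbol{0})=(\sqrt{N}/L)^{3}\widetilde{\CI}_{q/Q}(w;\boldsymbol{0})$ decouples the whole $\bc=\boldsymbol{0}$ sum into
\[
\bigg(\frac{\sqrt{N}}{L^{2}}\bigg)^{\!3}\sum_{\substack{q_2\geq 1\\ p\mid q_2\Rightarrow p\mid\Omega p_0}}\frac{\widetilde{S}_{q_2}^{(2)}(\boldsymbol{0})}{q_2^{3}}\sum_{\substack{\mu^{2}(q_1)=1\\ \gcd(q_1,q_2\Omega p_0)=1}}\frac{\psi_{0}(q_1)\,\widetilde{\CI}_{q_1q_2/Q}(w;\boldsymbol{0})}{q_1}.
\]

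Next I identify the bad-prime Euler factors of the singular series. Unfolding \eqref{eq:S2} via Chinese remainders exhibits $\widetilde{S}_{q_2}^{(2)}(\boldsymbol{0})/q_2^{3}$ as multiplicative in $q_2$, with $p$-local factor a finite truncation of $\sigma_p(\CV_1;(L,\blambda))$ when $p\mid\Omega$ and $p\neq p_0$, and of $\sigma_{p_0}(\CV_0)$ when $p=p_0$. The crux at $p_0$ is that $\blambda_N=p_0^{h}\blambda$ satisfies $F(\blambda_N)=p_0^{2h}F(\blambda)\equiv m_0N\equiv 0\bmod p_0^{2h}$, so after rescaling the $p_0$-local count degenerates onto the affine cone $\CV_0=(F=0)$, and the limit reproduces $\sigma_{p_0}(\CV_0)$. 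Absolute convergence of the $q_2$-series is furnished by Lemma~\ref{leS2bd} via $\widetilde{S}_{q_2}^{(2)}(\boldsymbol{0})/q_2^{3}\ll q_2^{-1/2}$; the $L^{4}$ prefactor in the proposition appears after clearing $L$-denominators in the local densities.

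For the $q_1$-sum I apply Abel summation against $\widetilde{\CI}_{q_1q_2/Q}(w;\boldsymbol{0})$, exploiting the derivative bound $r\,\partial_r\widetilde{\CI}_r(w;\boldsymbol{0})\ll 1$ of Lemma~\ref{le:easyhardest} together with the limit $\lim_{r\to 0^{+}}\widetilde{\CI}_r(w;\boldsymbol{0})=\widetilde{\CI}(w)$ coming from Heath-Brown's normalisation of $h$. Under \eqref{eq:notsq} the character $\psi_{0}$ is non-principal and $\sum_{\mu^{2}(q_1)=1,\,\gcd(q_1,M)=1}\psi_{0}(q_1)/q_1$ converges to $L(1,\psi_{0})$ times an explicit Euler product at $p\mid M$; these Euler factors mesh with those from step two to yield the stated $L(1,\psi_{0})\widetilde{\mathfrak{S}}_F(p_0;L,\blambda)$ main term. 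Under \eqref{eq:m0deltasq} the character $\psi_{0}$ is principal and a Mertens-type asymptotic
\[
\sum_{\substack{q_1\leq Y\\ \mu^{2}(q_1)=1,\,\gcd(q_1,M)=1}}\frac{1}{q_1}=c(M)\log Y+D(M)+O_{\varepsilon}(Y^{-1/2+\varepsilon}),
\]
evaluated at $Y=Q/q_2=\sqrt{N}/(Lq_2)$, generates the $\widetilde{\CI}(w)L^{4}\widetilde{\mathfrak{S}}_F(p_0;L,\blambda)\log\sqrt{N}$ leading term, while $D(M)$, the $\widetilde{\CI}_r-\widetilde{\CI}$ defect, and the logarithmic shift $-\log(Lq_2)$ assemble into $a_0=O(1)$ after summing over $q_2$.

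The residual error $O_{\varepsilon}(N^{5/4+\varepsilon})$ absorbs three ingredients: the Abel summation remainder in $q_1$ (controlled by the derivative bound), the tail in $q_2$ (by Lemma~\ref{leS2bd}), and the $q\ll Q$ truncation (via $C_Q=1+O_{N}(Q^{-N})$ and a rough application of Lemma~\ref{le:Kloosterman}). The main obstacle I foresee is step two, where the Euler product arising from $\widetilde{S}_{q_2}^{(2)}(\boldsymbol{0})/q_2^{3}$ must be matched exactly against the factors in \eqref{eq:singser}. The condition $L^{2}\mid F(Lq_1\bsigma+\blambda_N)-m_0N$ appearing in \eqref{eq:S2} entangles the $L$-part and the $p_0$-part of the modulus, so one has to carefully unscramble them and verify that the $p_0$-truncation really converges to $\sigma_{p_0}(\CV_0)$ and is compatible with the $(1-\psi_{0}(p_0)/p_0)$ prefactor in \eqref{eq:singser}.
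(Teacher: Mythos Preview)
Your proposal is essentially correct and follows the same strategy as the paper: evaluate $\widetilde{S}_q^{(1)}(\boldsymbol{0})=\mu^2(q_1)\psi_0(q_1)q_1^{2}$ via Lemma~\ref{le:S1}, identify the bad-prime Euler factors with the local densities in \eqref{eq:singser} (including the degeneration at $p_0$ to $\sigma_{p_0}(\CV_0)$ via Hensel), and then remove the oscillatory integral by partial summation using Lemma~\ref{le:easyhardest} and \cite[Lemma 13]{H-Bdelta}. The paper packages the first two steps slightly differently, writing the Dirichlet series $\varPi(s)=\sum_q \widetilde{S}_q(\boldsymbol{0})q^{-s}=\BL(s-2,\psi_0)\widetilde{\nu}(s)$ and invoking Perron's formula to obtain $\sum_{q\le X}\widetilde{S}_q(\boldsymbol{0})/q^{3}$ with error $O_\varepsilon(X^{-1/2+\varepsilon})$, and then splitting the $q$-range at $Q^{1-\varepsilon_0}$ rather than decoupling $q_1,q_2$ first; but the substance is the same.

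Two small points to tighten. First, the set of ``bad'' primes for $q_2$ must include the prime divisors of $m_0$ as well, i.e.\ $q_2\mid (m_0p_0\Omega)^\infty$, since Lemma~\ref{le:S1} requires $\gcd(q_1,m_0N)=1$; otherwise your formula for $\widetilde{S}_q^{(1)}(\boldsymbol{0})$ is not valid at primes $p\mid m_0$, $p\nmid \Omega p_0$. Second, your Abel-summation error analysis is a bit impressionistic; to land on $O_\varepsilon(N^{5/4+\varepsilon})$ you need, as the paper does, the quantitative input $\sum_{q\le X}\widetilde{S}_q(\boldsymbol{0})\ll_\varepsilon X^{5/2+\varepsilon}$ (equivalently a $t^{1/2+\varepsilon}$ bound on $\sum_{q_1\le t}\psi_0(q_1)\mu^2(q_1)$ after undoing the $q_2$-sum), together with the derivative bound on $\widetilde{\CI}_r(w;\boldsymbol{0})$, rather than just the qualitative convergence of $\sum \psi_0(q_1)\mu^2(q_1)/q_1$.
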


The majority of remaining part of this section is devoted to the proof of Theorem \ref{thm:cneq0sum}. We prove Proposition \ref{prop:c=0} in \S\ref{se:c=0}.
\subsection{Preliminary manipulation}
Before going into the treatment of different types of $\bc\in\BZ^3\setminus\{\boldsymbol{0}\}$, we take a closer look at the dependency on $N$ for the quantity $\CS_{q_2}(x;\bc)$ \eqref{eq:CSq2}. For each $q_2$, write $q_2^\flat:=q_{2,p_0}$ and $q_2^\natural:=q_2/q_2^\flat$, so that $(q_2^\flat,q_2^\natural L^2)=1$. We also write 
$$a_2=b_1q_2^\natural+b_2q_2^\flat, \quad b_1\bmod q_2^\flat,b_2\bmod q_2^\natural,$$
$$\bbeta=q_2^\natural L^2\bbeta_1+q_2^\flat \bbeta_2,\quad \bbeta_1\bmod q_2^\flat,\bbeta_2\bmod q_2^\natural L^2.$$
Then we now decompose further  \begin{align*}
	\CS_{q_2}(x;\bc)&=\sum_{\substack{b_1\bmod q_2^\flat\\(b_1,q_2^\flat)=1}}\sum_{\substack{\bbeta_1\bmod q_2^\flat}}\e_{q_2^\flat}\left(b_1 (F(q_2^\natural L\bbeta_1)-m_0p_0^{2h})+\overline{x}\bc\cdot\bbeta_1\right)\\ &\times \sum_{\substack{b_2\bmod q_2^\natural\\(b_2,q_2^\natural)=1}}\sum_{\substack{\bbeta_2\bmod q_2^\natural L^2\\F(q_2^\flat\bbeta_2)\equiv m_0N\bmod L^2\\ q_2^\flat\bbeta_2\equiv \blambda_N\bmod L}}\e_{q_2^\natural L^2}\left(b_2 (F(q_2^\flat\bbeta_2)-m_0p_0^{2h})+\overline{x}\bc\cdot\bbeta_2\right).
\end{align*} 
Making the change of variables $$b_1':=(q_2^\natural L)^2 b_1, \quad \bbeta_2':=\overline{p_0}^{h-\operatorname{ord}_{p_0}(q_2)}\bbeta_2,\quad b_2':=p_0^{2h}b_2,$$ we obtain $$\CS_{q_2}(x;\bc)=\CT^{(1)}_{q_2}(x;\bc)\CT^{(2)}_{q_2}(\overline{p_0}^{h-\operatorname{ord}_{p_0}(q_2)}x;\bc),$$
where \begin{equation}\label{eq:CT1}
	\begin{split}
		\CT^{(1)}_{q_2}(x;\bc):=\sum_{\substack{b_1'\bmod q_2^\flat\\(b_1',q_2^\flat)=1}}\sum_{\substack{\bbeta_1\bmod q_2^\flat}}\e_{q_2^\flat}\left(b_1' F(\bbeta_1)+\overline{x}\bc\cdot\bbeta_1\right),  \end{split}
\end{equation}
\begin{equation}\label{eq:CT2}
	\begin{split}
		\CT^{(2)}_{q_2}(x;\bc):=\sum_{\substack{b_2'\bmod q_2^\natural\\(b_2',q_2^\natural)=1}}\sum_{\substack{\bbeta_2'\bmod q_2^\natural L^2\\F(\bbeta_2')\equiv m_0\bmod L^2\\ \bbeta_2'\equiv \blambda\bmod L}}\e_{q_2^\natural L^2}\left(b_2' (F(\bbeta_2')-m_0)+\overline{x}\bc\cdot\bbeta_2'\right),
	\end{split}
\end{equation}
Note that we always have $\operatorname{ord}_{p_0}(q_2)\leqslant h$.  
For $\chi\bmod q_2L^2$, we write $\chi_1\bmod q_2^\flat,\chi_2\bmod q_2^\natural L^2$ for its decomposition. Recalling \eqref{eq:CA}, we then have
\begin{align}
	\CA_{q_2}(\chi;\bc)&=\frac{1}{\phi(q_2L^2)}\sum_{\substack{x_1\bmod q_2^\flat\\x_2\bmod q_2^\natural L^2}}\chi_1(x_1)^c\CT^{(1)}_{q_2}(x_1;\bc)\chi_2(x_2)^c \CT^{(2)}_{q_2}(\overline{p_0}^{h-\operatorname{ord}_{p_0}(q_2)}x_2;\bc)\nonumber\\
	&=\chi_2(p_0^h)\widetilde{\CA}_{q_2}(\chi;\bc).\label{eq:CAtildeCA}
\end{align}
where we introduce \begin{equation}\label{eq:CAtilde}
	\widetilde{\CA}_{q_2}(\chi;\bc):=\frac{\chi_2(q_2^\flat)^c}{\phi(q_2L^2)}\sum_{\substack{x_1\bmod q_2^\flat\\x_2\bmod q_2^\natural L^2}}\chi(x_1x_2)^c \CT^{(1)}_{q_2}(x_1;\bc)\CT^{(2)}_{q_2}(x_2;\bc).
\end{equation} Note that $\widetilde{\CA}_{q_2}(\chi;\bc)$ depends purely on the quadratic form $F$ and the local conditions (in addition to $\bc,q_2,\chi$) but is independent of $h$.
By Lemma \ref{le:CAbd}, we have, uniformly for all $\bc,q_2,\chi$,
\begin{equation}\label{eq:CAbd}
	\widetilde{\CA}_{q_2}(\chi;\bc)\ll_{\varepsilon} \frac{q_2^{\frac{43}{16}+\varepsilon}}{|\chi_*|^{\frac{1}{4}}}.
\end{equation}

As in \cite[\S5]{Huang}, we introduce
\begin{equation}\label{eq:CGqc}
	\widetilde{\CG}_q(\bc):=\frac{\e_{qL^2}(\bc\cdot\blambda_N)\widetilde{S}_{q}(\bc)\widetilde{\CI}_{\frac{q}{Q}}(w;\frac{\bc}{L})}{q^3}.
\end{equation}
so that the sum in Theorem \ref{thm:cneq0sum} equals \begin{equation}\label{eq:execstep}
	\frac{N^{\frac{3}{2}}}{L^6}\sum_{\substack{\bc\in\BZ^3\setminus\{\boldsymbol{0}\}}}\sum_{q\ll Q}\widetilde{\CG}_q(\bc).
\end{equation}
Then by Lemma \ref{le:S1} and the equations \eqref{eq:SCS} \eqref{eq:CAtildeCA} (cf. the manipulation in \cite[\S4.1]{Huang}), we have
\begin{equation}\label{eq:CGq}
	\widetilde{\CG}_q(\bc)=\left(\sum_{\chi\bmod q_2L^2}\frac{\psi_0(q_1)\chi(q_1)\chi_2(p_0^h)\widetilde{\CA}_{q_2}(\chi;\bc)}{q_1q_2^3}\sum_{\substack{u\bmod q_1\\ G_{\bc,q_2}(u)\equiv 0\bmod q_1}}\e_{q_1}(u)\right)\widetilde{\CI}_{\frac{q}{Q}}\left(w,\frac{\bc}{L}\right).
\end{equation}

\subsection{Contribution from the exceptional $\bc$}\label{se:exec}
We shall call a Poisson variable $\bc\in\BZ^3\setminus\{\boldsymbol{0}\}$ \emph{exceptional} if the corresponding polynomial  $G_{\bc,l}(T)$ \eqref{eq:polyG} is reducible, or equivalently, \begin{equation}\label{eq:square}
	m_0 \Delta_F F^*(\bc)=\square.
\end{equation}
We shall also call an exceptional $\bc$ of \emph{Type I} (resp. \emph{Type II}) if moreover $F^*(\bc)\neq 0$ (resp. if $F^*(\bc)=0$).
\begin{theorem}\label{thm:exec}
	For all exceptional $\bc\in\BZ^3\setminus\{\boldsymbol{0}\}$, we have
	$$\sum_{q\ll Q} \widetilde{\CG}_q(\bc)=\widetilde{\eta}_h(\bc)+O_\varepsilon\left(|\bc|^{\frac{1}{2}+\varepsilon}Q^{-2\lambda_0+\varepsilon}\right),$$ where $\lambda_0>0$ is a certain effectively computable numerical constant, and $\eta_h(\bc)$ is defined respectively (for Type I and II) by \eqref{eq:etac} and \eqref{eq:etac2}, and satisfies that for all $M>0$, \begin{equation}\label{eq:etacbd}
		\widetilde{\eta}_h(\bc)\ll_M |\bc|^{-\frac{1}{3}M}.
	\end{equation}
\end{theorem}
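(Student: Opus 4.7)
The approach is to exploit the reducibility of the polynomial $G_{\bc,q_2}(T)$ guaranteed by \eqref{eq:square} to render the inner exponential sum over $u$ in \eqref{eq:CGq} completely explicit, then to assemble the outer $q_1$-sum as a truncated Dirichlet series and extract the main term by a Mellin--Perron contour shift. First, since $N=p_0^{2h}$ and $m_0\Delta_F F^*(\bc)=s^2$ for some $s\in\BZ_{\geqslant 0}$, we obtain the integer factorisation
\[G_{\bc,q_2}(T)=(\Delta_F T-q_2L^2p_0^hs)(\Delta_F T+q_2L^2p_0^hs).\]
Hence, for $(q_1,2\Delta_F)=1$, the inner sum $\sum_{u}\e_{q_1}(u)$ in \eqref{eq:CGq} equals $\e_{q_1}(A_{q_2})+\e_{q_1}(-A_{q_2})$ in Type I, with $A_{q_2}:=\overline{\Delta_F}\,q_2L^2p_0^hs$, and collapses to a single contribution $1$ in Type II (where $s=0$ is a double root at $u=0$). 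At the ramified primes $p\mid 2\Delta_F$ one has to treat the corresponding local factor separately but uniformly.

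Plugging this explicit evaluation into \eqref{eq:CGq} and fixing $q_2$ and $\chi\bmod q_2L^2$, the outer sum takes the shape
\[\sum_{\substack{q_1\leqslant Q/q_2\\(q_1,q_2\Omega)=1}}\frac{\psi_0(q_1)\chi(q_1)}{q_1}\bigl(\e_{q_1}(A_{q_2})+\e_{q_1}(-A_{q_2})\bigr)\widetilde{\CI}_{\frac{q_1q_2}{Q}}\!\left(w,\frac{\bc}{L}\right),\]
or its analogue with the bracket replaced by $1$ in Type II. Taking Mellin transform of $r\mapsto\widetilde{\CI}_{r}(w,\bc/L)$ in $r$ and rewriting the $q_1$-sum as a Perron-type contour integral on $\operatorname{Re}(s)=1+\varepsilon$, the resulting Dirichlet generating function factors through either $L(s,\psi_0\chi)$ (Type II) or a twisted Estermann-type series attached to the additive character $n\mapsto\e(\pm A_{q_2}/n)$ (Type I). In both cases the function admits meromorphic continuation to a half-plane $\operatorname{Re}(s)>1-\lambda_0$ with at most one simple pole at $s=1$ whenever the relevant character becomes principal.

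Shifting the contour to $\operatorname{Re}(s)=1-\lambda_0$ with $\lambda_0>0$ inside the standard zero-free region, we collect the residues and define $\widetilde{\eta}_h(\bc)$ as the resulting sum (after re-summation over $q_2$ and $\chi$ using \eqref{eq:CAtildeCA}); the $h$-dependence enters exactly through the factor $\chi_2(p_0^h)$ in \eqref{eq:CAtildeCA} and the shift $p_0^hs$ inside $A_{q_2}$. The error from the shifted contour is bounded by combining \eqref{eq:CAbd} for the character-averaged $\widetilde{\CA}_{q_2}(\chi;\bc)$, convexity for the $L$-factors, and the Mellin transforms controlled by Lemmas \ref{le:easyhardest} and \ref{le:easyhardpartial}. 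The $|\bc|^{1/2+\varepsilon}$ factor in the error emerges from the analytic conductors of the associated twisted $L$-series, while the decay bound \eqref{eq:etacbd} is obtained by rerunning the shifted-contour estimate with the ``simple'' estimate of Lemmas \ref{le:easyhardest}--\ref{le:easyhardpartial}, which furnishes arbitrary polynomial decay in $|\bc|$ overpowering the conductor-type growth.

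The main obstacle lies in the additional factor $r^{-1}$ in Lemma \ref{le:easyhardpartial} relative to the homogeneous analogue of \cite{Huang}: the Mellin transform of $\widetilde{\CI}_r$ degrades as $r\to 0$, corresponding to small $q_1$. We overcome this by a dyadic separation of the $q_1$-range: on small $q_1\ll Q^{1-\lambda_0}$ we rely on the trivial estimate of Lemma \ref{le:easyhardest} together with the square-root cancellation extracted from the explicit exponential sum in the first step and the bound \eqref{eq:CAbd}, whereas on $q_1\asymp Q$ the Perron contour shift runs cleanly. Balancing the two regimes produces the effectively computable numerical constant $\lambda_0$ and explains the restriction alluded to in Remark \ref{rmk:exceptionalc}.
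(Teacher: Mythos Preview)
Your Type II outline is essentially sound and close to the paper's argument (Burgess for non-principal $\psi_0\chi$, elementary asymptotic for principal), but the Type I argument has a genuine gap.

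The roots of $G_{\bc,q_2}$ modulo $q_1$ are $u\equiv\pm\overline{\Delta_F}\,q_2L^2p_0^h s\pmod{q_1}$, where $\overline{\Delta_F}$ is the inverse of $\Delta_F$ \emph{modulo $q_1$} and therefore varies with $q_1$. Consequently $\e_{q_1}(u)$ is a Kloosterman-type fraction, not a term of the form $\e(B/q_1)$ with a fixed numerator. More importantly, even after resolving this (as the paper does via its $a_0,b_0,g,j,k$ dissection \eqref{eq:a0b0}--\eqref{eq:sumgjk}), the resulting additive oscillation carries a numerator of size $p_0^h|\bc|\asymp Q|\bc|$, because $N=p_0^{2h}$ and $Q=p_0^h/L$. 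Your proposed Mellin decoupling takes the transform of $r\mapsto\widetilde{\CI}_r$ alone and leaves a Dirichlet series whose ``analytic conductor'' is therefore $\asymp Q|\bc|$, not $\asymp|\bc|$. Any contour shift to $\Re(s)=1-\lambda_0$ will then produce an error polynomial in $Q$, destroying the power saving; your claim that the conductor contributes only $|\bc|^{1/2+\varepsilon}$ is where the argument breaks.

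This is exactly the obstruction flagged in Remark~\ref{rmk:exceptionalc}: the factor $\e_{a_0gjk}(b_0)$ with $b_0\asymp Q$ is highly oscillatory and \emph{must} be kept coupled to the oscillatory integral $\widetilde{\CI}_{q/Q}$ when one performs partial summation. The paper never passes through Mellin or $L$-function zero-free regions; instead it applies elementary partial summation to the inner $j$-sum in \eqref{eq:sumgjk}, using the asymptotic \eqref{eq:CBk} for $\CB_\chi(k;Y)$ (a Weil-type input from \cite[Lemma~3.2]{Huang}), and differentiates the product $\e_{a_0s}(b_0q_2)\widetilde{\CI}_{s/Q}(\cdot)/s$ as a whole. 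After the change of variable $s=rQ$, the factor $p_0^h$ in $b_0$ cancels against $Q$, yielding the $h$-independent integral $\widetilde{\CJ}_{q_2}(\bc)$ in \eqref{eq:Lambda}; this cancellation is precisely what your decoupled Mellin scheme forfeits. Your dyadic-separation fix does not rescue this: on the range $q_1\asymp Q$ the contour shift still sees conductor $\asymp Q$, and on the short range there is no ``square-root cancellation'' in the $u$-sum (it has only two terms).
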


\begin{proof}[Proof of Theorem \ref{thm:exec}]
Let $0<\delta,\kappa<1$ be fixed.
By Lemmas \ref{le:Kloosterman} and the ``harder estimate'' in \ref{le:easyhardest} (cf. the proof of \cite[Lemma 5.3]{Huang}) we may restrict to the range $Q^{1-\delta}<q\ll Q$, $\delta>0$, upon adding up an error term $\ll_{\varepsilon} |\bc|^{-\frac{1}{2}+\varepsilon} Q^{-\frac{1}{2}\delta+\varepsilon}$.
On the other hand, by Lemma \ref{le:Kloosterman} and the harder estimate as in \cite[(5.5)]{Huang}, the contribution from $q_{m_0p_0\Omega}>X^\kappa$ is $\ll_{\varepsilon}|\bc|^{\frac{1}{2}+\varepsilon} Q^{-\frac{1}{2}\kappa+\varepsilon}$. 
We may therefore proceed under the assumption that $Q^{1-\delta}<q\ll Q$ and $q_{m_0p_0\Omega}\leqslant X^\kappa$, where we shall determine the values of $\delta,\kappa>0$ later. 

We first assume that $\bc$ is exceptional of Type I.
By  \eqref{eq:CGq}, we can write
\begin{equation}\label{eq:sumCG}
\sum_{\substack{Q^{1-\delta}<q\ll Q\\q_{m_0p_0\Omega}\leqslant X^\kappa}} \widetilde{\CG}_q(\bc)=\sum_{\substack{q_2\leqslant Q^\kappa \\q_2\mid(m_0p_0\Omega)^\infty}}\frac{1}{q_2^3}\sum_{\chi\bmod q_2L^2}\chi_2(p_0^h)\widetilde{\CA}_{q_2}(\chi;\bc)
 \CV_{q_2,\bc}(\chi;Q), 
\end{equation}
where for $T\geqslant Q^{1-\delta}$, we introduce
\begin{equation}\label{eq:CU}
	\CV_{q_2,\bc}(\chi;T):=\sum_{\substack{\frac{Q^{1-\delta}}{q_2}<q_1\ll \frac{T}{q_2}\\ \gcd(q_1,m_0p_0\Omega)=1}}\psi_0(q_1)\chi(q_1)\sum_{\substack{u\bmod q_1\\ G_{\bc,q_2}(u)\equiv 0\bmod q_1}}\e_{q_1}(u)\frac{\widetilde{\CI}_{q_1q_2/Q}(w,\frac{\bc}{L})}{q_1}
\end{equation}

Let $$\CN(\bc):=	\sqrt{m_0 \Delta_F F^*(\bc)},$$
	\begin{equation}\label{eq:a0b0}
	a_0:=\frac{\Delta_F}{\gcd(\Delta_F,p_0^hq_2L^2\CN(\bc))},\quad b_0:=\frac{p_0^h q_2L^2\CN(\bc)}{\gcd(\Delta_F,p_0^hq_2L^2\CN(\bc))}.
\end{equation}
The argument in \cite[\S3.2]{Huang} 
shows that the $q_1$-sum  $\CV_{q_2,\bc}(\chi;T)$ can be dissected as
\begin{multline}\label{eq:sumgjk}
	\CV_{q_2,\bc}(\chi;T)=2\sum_{\substack{g\mid 2a_0b_0\\ (g,a_0)=1}}\mu^2(g)\sum_{\substack{k\leqslant\sqrt{\frac{T}{q_2g}}\\ (k,a_0b_0 m_0p_0\Omega)=1}}\psi_0(k)\chi(k)\\ \times\sum_{\substack{\frac{Q^{1-\delta}}{q_2gk}<j\ll\frac{T}{q_2gk}\\ (j,ka_0b_0 m_0p_0\Omega)=1}}\psi_0(j)\chi(j)\e_{a_0k}\left(\widehat{t}\cdot\overline{j}\right)\e_{a_0gjk}(b_0)\frac{\widetilde{\CI}_{gjkq_2/Q}(w,\frac{\bc}{L})}{gjk},
\end{multline}
where  $\widehat{t}\bmod a_0k$ depends on $g,k,a_0,b_0,\bc$ but does not depend on $j$ and satisfies  $\gcd(\widehat{t},a_0k)=1$.

On fixing $k$ and $\chi$, let $$\CB_{\chi}(k;Y):=\sum_{\substack{j\leqslant Y\\ (j,ka_0b_0 m_0p_0\Omega)=1}}\psi_0(j)\chi(j)\e_{a_0k}\left(\widehat{t}\cdot\overline{j}\right).$$
Then \cite[Lemma 3.2]{Huang} shows the existence of $\Theta_{\chi}(k)\in\BC$ such that
\begin{equation}\label{eq:CBk}
	\CB_{\chi}(k;Y)= \Theta_{\chi}(k)Y+O_\varepsilon\left((a_0b_0)^\varepsilon|\chi|^{\frac{11}{8}+\varepsilon}(a_0k)^{\frac{7}{8}+\varepsilon}\right).
\end{equation}
By partial summation, we have (the gcd condition can be detected by a certain principal character) that the inner $j$-sum in \eqref{eq:sumgjk} equals
\begin{multline*}
	-\int_{Q^{1-\delta}/q_2gk}^{cQ/q_2gk}\CB_{\chi}(k;t)\frac{\partial}{\partial t}\left(\e_{a_0gkt}(b_0)\frac{\widetilde{\CI}_{gktq_2/Q}(w,\frac{\bc}{L})}{gkt}\right)\operatorname{d}t\\+\left[\CB_{\chi}(k;t)\e_{a_0gkt}(b_0)\frac{\widetilde{\CI}_{gktq_2/Q}(w,\frac{\bc}{L})}{gkt}\right]_{Q^{1-\delta}/q_2gk}^{cQ/q_2gk}.
\end{multline*}
Here we fix $c>0$ such that $\widetilde{\CI}_{r}(w,\bb)$ vanishes for $r\geqslant c$. By Lemma \ref{le:easyhardest}, the variation term, evaluated to zero at $t=cQ/q_2gk$, is thus
$$\ll_\varepsilon Q^{-\frac{1}{2}\delta+\varepsilon}\frac{1}{|\bc|^{\frac{1}{2}-\varepsilon}gk}.$$
On employing \eqref{eq:CBk} and on setting $t\mapsto s:=tq_2gk$, the integral above equals
\begin{multline*}
	-\frac{\Theta_{\chi}(k)}{gk}\int_{Q^{1-\delta}}^{cQ}s\frac{\partial}{\partial s}\left(\e_{a_0s}(b_0q_2)\frac{\widetilde{\CI}_{s/Q}(w,\frac{\bc}{L})}{s}\right)\operatorname{d}s\\ +O_\varepsilon \left(Q^\varepsilon q_2^{\frac{19}{8}+\varepsilon}k^{\frac{7}{8}+\varepsilon}\left|\int_{Q^{1-\delta}}^{cQ}\frac{\partial}{\partial s}\left(\e_{a_0s}(b_0q_2)\frac{\widetilde{\CI}_{s/Q}(w,\frac{\bc}{L})}{s}\right)\right|\operatorname{d}s\right).\end{multline*}
Recall \eqref{eq:a0b0}. Executing the change of variable $s\mapsto r:=\frac{s}{Q}$ by using the ``harder estimate'' in Lemma \ref{le:easyhardest} several times, the integral in the main term can be computed as follows, via integration-by-parts.
\begin{align*}
	&-\int_{Q^{-\delta}}^{c}r\frac{\partial}{\partial r}\left(\e_{\Delta_F r}(q_2^2L^3\CN(\bc))\frac{\widetilde{\CI}_{r}(w,\frac{\bc}{L})}{r}\right)\operatorname{d}r\\ =&\int_{Q^{-\delta}}^{c} \e_{\Delta_F r}(q_2^2L^3\CN(\bc))\frac{\widetilde{\CI}_{r}(w,\frac{\bc}{L})}{r}\operatorname{d}r+O_\varepsilon (Q^{-\frac{1}{2}\delta+\varepsilon}|\bc|^{-\frac{1}{2}+\varepsilon})\\ =&\widetilde{\CJ}_{q_2}(\bc)+O_\varepsilon (Q^{-\frac{1}{2}\delta+\varepsilon}|\bc|^{-\frac{1}{2}+\varepsilon}),
\end{align*} where we introduce \begin{equation}\label{eq:Lambda}
\widetilde{\CJ}_{u}(\bc):=\int_{0}^{c} \e_{\Delta_F r}(u^2L^3\CN(\bc))\frac{\widetilde{\CI}_{r}(w,\frac{\bc}{L})}{r}\operatorname{d}r.
\end{equation} 
Note that, using the ``simple estimate'' and ``harder estimate'' in Lemma \ref{le:easyhardest} we can show, as in the proof of \cite[Theorem 5.2]{Huang}, uniformly for all $u$,
\begin{equation}\label{eq:CJubd}
	\widetilde{\CJ}_{u}(\bc)\ll_M |\bc|^{-\frac{1}{3}M},
\end{equation} for any $M>0$.

 The integral in the error term can be bounded as follows, using the ``harder estimate'' in Lemmas \ref{le:easyhardest} \ref{le:easyhardpartial},
\begin{align*}
	\ll &Q^{-1}\int_{Q^{-\delta}}^{c}\left|\frac{\partial}{\partial r}\left(\e_{\Delta_F r}(q_2^2L^3\CN(\bc))\frac{\widetilde{\CI}_{r}(w,\frac{\bc}{L})}{r}\right)\right|\operatorname{d}r\\ \ll &Q^{-1} q_2^2 \CN(\bc)\int_{Q^{-\delta}}^{c}\left(\left|\frac{\widetilde{\CI}_{r}(w,\frac{\bc}{L})}{r^3}\right|+\left|\frac{\frac{\partial}{\partial r}\widetilde{\CI}_{r}(w,\frac{\bc}{L})}{r}\right|\right)\operatorname{d}r\ll_{\varepsilon} q_2^2|\bc|^{\frac{1}{2}+\varepsilon}Q^{-1+\frac{3}{2}\delta+\varepsilon}.
\end{align*}

We come back to the remaining $g,k$-sums in $\CV_{q_2,\bc}(\chi;T)$. The error term arising from the previous computations is bounded by
$$\ll_{\varepsilon} Q^\varepsilon |\bc|^{\frac{1}{2}+\varepsilon} \left(q_2^{\frac{55}{16}}Q^{-\frac{1}{16}+\frac{3}{2}\delta}+Q^{-\frac{1}{2}\delta}\right).$$
 We introduce  (cf. \cite[(3.16)]{Huang})
\begin{equation}\label{eq:Gammachi}
	\Gamma_{\chi}(q_2;\bc):=2\sum_{\substack{g\mid 2a_0b_0\\ (g,a_0)=1}}\frac{\mu^2(g)}{g}\sum_{\substack{k\in\BN\\(k,a_0b_0 m_0p_0\Omega)=1}}\frac{\psi_0(k)\chi(k)\Theta_{\chi}(k)}{k},
\end{equation}  which satisfies \begin{equation}\label{eq:Gammachibd}
\Gamma_{\chi}(q_2;\bc)\ll_{\varepsilon}\frac{(q_2|\bc|)^\varepsilon}{|\chi_*|^{\frac{15}{32}}}.
\end{equation} Mimicking the proof of \cite[Theorem 3.1 (2)]{Huang}, extending the $k$-sum to infinity produces an error term of order $\ll_{\varepsilon} q_2^{\frac{1}{16}+\varepsilon} Q^{-\frac{1}{16}+\varepsilon}$. We conclude that 
$$\CV_{q_2,\bc}(\chi;T)=\Gamma_{\chi}(q_2;\bc)\widetilde{\CJ}_{q_2}(\bc)+O_\varepsilon\left(Q^\varepsilon |\bc|^{\frac{1}{2}+\varepsilon} \left(q_2^{\frac{55}{16}}Q^{-\frac{1}{16}+\frac{3}{2}\delta}+Q^{-\frac{1}{2}\delta}\right)\right).$$

Going back to the $q$-sum \eqref{eq:sumCG} and applying the argument of \cite[(4.5)]{Huang} based on the bounds \eqref{le:CAbd} \eqref{eq:Gammachibd}, the main term takes the form $$\sum_{\substack{u\leqslant Q^\kappa\\u\mid (m_0p_0\Omega)^\infty}}\sum_{\chi\bmod uL^2}\frac{\chi_2(p_0^h)\widetilde{\CA}_{u}(\chi;\bc)\Gamma_{\chi}(u;\bc)\widetilde{\CJ}_{u}(\bc)}{u^3}=\widetilde{\eta}_h(\bc)+O_\varepsilon(|\bc|^\varepsilon Q^{-\frac{1}{32}\kappa+\varepsilon}),$$ 
where we let
\begin{equation}\label{eq:etac}
	\widetilde{\eta}_h(\bc):=\sum_{u\mid (m_0p_0\Omega)^\infty}\sum_{\chi\bmod uL^2}\frac{\chi_2(p_0^h)\widetilde{\CA}_{u}(\chi;\bc)\Gamma_{\chi}(u;\bc)\widetilde{\CJ}_{u}(\bc)}{u^3}.
\end{equation}
The error term contributes all together $$\ll_{\varepsilon} Q^\varepsilon |\bc|^{\frac{1}{2}+\varepsilon} \left(Q^{-\frac{1}{16}+\frac{55}{16}\kappa+\frac{3}{2}\delta}+Q^{-\frac{1}{2}\delta}\right).$$

We next assume $\bc$ to be exceptional of Type II.
Then by Lemma \ref{le:S1} (cf. \cite[(4.7)]{Huang}), $$\sum_{\substack{u\bmod q_1\\ G_{\bc,q_2}(u)\equiv 0\bmod q_1}}\e_{q_1}(u)=\sum_{\substack{u^2\equiv 0\bmod q_1}} \e_{q_1}(u)=\mu(q_1)^2,$$ and hence
$$\CV_{q_2,\bc}(\chi;T)=\sum_{\substack{\frac{Q^{1-\delta}}{q_2}<q_1\ll \frac{T}{q_2}\\ \gcd(q_1,m_0p_0\Omega)=1}}\psi_0(q_1)\chi(q_1)\mu^2(q_1)\frac{\widetilde{\CI}_{q_1q_2/Q}(w,\frac{\bc}{L})}{q_1}.$$
For fixed $\chi$ let us define $$B_{\chi}(t):=\sum_{\substack{q\leqslant t\\\gcd(q,m_0p_0\Omega)=1}}\psi_0(q)\chi(q)\mu^2(q)$$
If $\chi\psi_0$ is non-principal, by the Burgess bound as in \cite[\S4.1.3]{Huang}, we have $$B_{\chi}(t)\ll_{\varepsilon} Q^\varepsilon t^{\frac{1}{2}+\varepsilon}q_2^{\frac{3}{16}+\varepsilon}.$$ Coupled with partial summation  we obtain that for such a $\chi$,
\begin{align*}
	\CV_{q_2,\bc}(\chi;T)&=-\int_{Q^{1-\delta}/q_2}^{cQ/q_2}B_{\chi}(t)\frac{\partial}{\partial t}\left(\frac{\widetilde{\CI}_{tq_2/Q}(w,\frac{\bc}{L})}{t}\right)\operatorname{d}t+\left[B_{\chi}(t)\frac{\widetilde{\CI}_{tq_2/Q}(w,\frac{\bc}{L})}{t}\right]_{Q^{1-\delta}/q_2}^{cQ/q_2}\\ &\ll_{\varepsilon} Q^\varepsilon q_2^{\frac{3}{16}}\left(\left(\frac{Q}{q_2}\right)^{-\frac{1}{2}+\varepsilon}\int_{Q^{-\delta}}^{c}r^{\frac{1}{2}+\varepsilon}\left|\frac{\partial}{\partial r}\left(\frac{\widetilde{\CI}_{r}(w,\frac{\bc}{L})}{r}\right)\right|\operatorname{d}r+\left(\frac{Q^{1-\delta}}{q_2}\right)^{-\frac{1}{2}+\varepsilon}\right)\\ &\ll_{\varepsilon} Q^{-\frac{1}{2}+2\delta+\varepsilon}q_2^{\frac{11}{16}}|\bc|^{-\frac{1}{2}+\varepsilon},
\end{align*} by Lemma \ref{le:easyhardpartial}.

For $\chi=\psi_0$, we have
$$B_{\psi_0}(t)=\frac{6}{\pi^2}\prod_{p\mid p_0m_0\Omega}\left(1-\frac{1}{p}\right)t+O(t^\frac{1}{2}),$$ whence again by partial summation, $$\CV_{q_2,\bc}(\psi_0;T)=\CJ(\bc)\frac{6}{\pi^2}\prod_{p\mid p_0m_0\Omega}\left(1-\frac{1}{p}\right)+O_\varepsilon\left(Q^{-\frac{1}{2}\delta+\varepsilon}|\bc|^{-\frac{1}{2}+\varepsilon}\right),$$ where we define (as in \cite[(5.1)]{Huang})\begin{equation}\label{eq:CJbc}
	\CJ(\bc):=\int_{0}^{c}\frac{\widetilde{\CI}_{r}(w,\frac{\bc}{L})}{r}\operatorname{d}r.
\end{equation} It satisfies the same kind of bound as \eqref{eq:CJubd}.
We define for such an exceptional  $\bc$ of type II,
\begin{equation}\label{eq:etac2}
	\widetilde{\eta}_h(\bc):=\CJ(\bc)\sum_{\substack{u\mid (m_0p_0\Omega)^\infty\\\exists\chi\bmod uL^2,\chi\psi_0 \text{ principal}}}\frac{\psi_{0,2}(p_0^h)\widetilde{\CA}_{u}(\psi_0;\bc)}{u^3}\frac{6}{\pi^2}\prod_{p\mid p_0m_0\Omega}\left(1-\frac{1}{p}\right).
\end{equation} Similarly the $q_2$-sum converges to $\widetilde{\eta}_h(\bc)$ up to $O_\varepsilon(|\bc|^\varepsilon Q^{-\frac{1}{32}\kappa+\varepsilon})$.

We therefore finish the proof by fixing $\delta,\kappa$ small enough. The rapid decay for $\widetilde{\eta}_h(\bc)$ \eqref{eq:etacbd} results from that for $\CJ_{u}(\bc)$ and $\CJ(\bc)$. \end{proof}

\begin{remark}\label{rmk:exceptionalc}
	Let us now explain the main difference between \cite[Proof of Theorem 5.2]{Huang} and here.  Recall the inner $j$-sum in \eqref{eq:sumgjk}. Since the term $b_0$ can be as large as $Q$, we see that the factor $\e_{a_0gjk}(b_0)$ can be highly oscillatory. This is the reason why we need to bring the oscillatory integral in and couple it with $\e_{a_0gjk}(b_0)$ when doing partial summation as early as dissecting the sum $\CV_{q_2,\bc}(\chi;T)$.
\end{remark}

		\subsection{Estimation of sums attached to ordinary $\bc$}\label{se:ordc}
		We shall call a $\bc\in\BZ^3\setminus\{\boldsymbol{0}\}$ \emph{ordinary} if \begin{equation}\label{eq:cord}
			m_0 \Delta_F F^*(\bc)\neq\square.
		\end{equation} 
		
		We recall \eqref{eq:CGqc}.
		\begin{theorem}\label{thm:cord}
		For any $M>0$, we have, uniformly for any ordinary $\bc\in\BZ^3\setminus\{\boldsymbol{0}\}$, 
			$$\sum_{q\ll Q}\widetilde{\CG}_q(\bc)\ll_M |\bc|^{-M}(\log N)^{-\frac{1}{4}(1-\frac{\sqrt{2}}{2})+A_0},$$ where $0<A_0<\frac{1}{4}(1-\frac{\sqrt{2}}{2})$ is a  certain effectively computable numerical constant.
		\end{theorem}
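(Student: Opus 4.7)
The strategy follows the blueprint of Theorem \ref{thm:exec}, but exploits the fact that for ordinary $\bc$ the quadratic polynomial $G_{\bc,q_2}(T)$ is irreducible over $\BQ$. This irreducibility injects a non-principal real character into the $q_1$-summation, and the logarithmic saving emerges from cancellation in that character sum. The rapid decay in $|\bc|$ will come, as in the exceptional case, from the smoothness of the oscillatory integral.

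First I would apply Lemma~\ref{le:Kloosterman} and the ``harder'' estimate of Lemma~\ref{le:easyhardest} to restrict attention to the range $Q^{1-\delta}<q\ll Q$ with $q_{m_0p_0\Omega}\leqslant Q^\kappa$. Because the target saving is only logarithmic, the parameters $\delta,\kappa$ must themselves be chosen to be small negative powers of $\log Q$, so that the resulting polynomial-in-$Q$ error terms convert into $(\log Q)^{-A}$ savings. The character decomposition \eqref{eq:CGq} then reduces the problem to bounding $\CV_{q_2,\bc}(\chi;Q)$. By Lemma~\ref{le:S1}, the inner sum over roots of $G_{\bc,q_2}$ modulo $q_1$ is a Sali\'e-type sum: extracting its quadratic symbol $\chi_\bc(q_1)=\left(\frac{m_0\Delta_F F^*(\bc)}{q_1}\right)$ and absorbing $\psi_0(q_1)\chi(q_1)$ produces a non-principal Dirichlet character $\xi=\psi_0\chi\chi_\bc$; the non-principality is guaranteed by the ordinariness assumption \eqref{eq:cord}.

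Next, adapting the dyadic-multiplicative expansion of \cite{Huang} (\emph{cf.} the exceptional-case manipulation leading to \eqref{eq:sumgjk}) to the irreducible quadratic, I would perform partial summation in the resulting inner sum, pairing the oscillatory phase with the smooth weight $\widetilde{\CI}_{rq_2/Q}(w,\bc/L)/r$. The central analytic input becomes a bound of the shape
$$
\sum_{n\leqslant Y}\xi(n)\,\e_{a_0k}\bigl(\widehat t\cdot \overline n\bigr)\ll_\varepsilon Y(\log Y)^{-\theta_0+\varepsilon},\qquad \theta_0:=\tfrac14\bigl(1-\tfrac{\sqrt{2}}{2}\bigr),
$$
valid for the non-principal quadratic character $\xi$ (whose conductor is bounded in terms of $q_2L^2$ and $|\bc|^{O(1)}$), achievable by combining Heath-Brown's hybrid bound for real character sums with a standard zero-free region, optimised over the various parameters. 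Reassembling the $g,k$-sums, the $q_2$-sum via \eqref{eq:CAbd}, and the $\chi$-sum via the analogue of \eqref{eq:Gammachibd} then incurs only a $(\log Q)^{O(1)}$ loss, which is absorbed into the admissible constant $A_0<\theta_0$. The rapid decay $\ll_M|\bc|^{-M}$ is inherited directly from the decay analysis of the weighted analogues of $\widetilde{\CJ}_u(\bc)$, alternating between the ``simple'' and ``harder'' estimates in Lemma~\ref{le:easyhardest}.

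The main obstacle, foreshadowed in Remark~\ref{rmk:ordc}, is the weak bound on the radial derivative $\partial_r\widetilde{\CI}_r(w,\bb)$ furnished by Lemma~\ref{le:easyhardpartial}: the extra factor of $r^{-1}$ relative to Lemma~\ref{le:easyhardest} would render the naive partial-summation integral logarithmically divergent at the endpoint $r\simeq Q^{-\delta}$. The fix, mirroring the device employed in the proof of Theorem~\ref{thm:exec}, is to avoid differentiating $\widetilde{\CI}_r(w,\bc/L)$ in isolation and instead couple it with the oscillatory factor $\e_{\Delta_F r}(\cdot)$ emerging naturally from the Sali\'e expansion, so that the combined integrand has bounded radial derivative uniformly in $r$. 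Balancing the gain from the character sum against the $q_2^{O(1)}$ and $k^{O(1)}$ losses from \eqref{eq:CAbd} and the character-sum estimate is what pins down the precise admissible exponent $\theta_0-A_0$.
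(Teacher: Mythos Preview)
Your proposal has a structural gap: the exceptional-case template you are following cannot be transplanted to ordinary $\bc$, and the specific device you invoke to tame the $r^{-1}$ divergence is unavailable here.

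The dissection \eqref{eq:sumgjk} into $g,j,k$-sums, and in particular the appearance of the explicit oscillatory factor $\e_{a_0gjk}(b_0)$, rests on the \emph{reducibility} of $G_{\bc,q_2}$: one writes down the two roots $\pm b_0/a_0$ and parametrises the $q_1$-sum accordingly. For ordinary $\bc$ the polynomial is irreducible, so there is no explicit root and no factor $\e_{\Delta_F r}(\cdot)$ for you to couple with. Relatedly, the inner sum $\sum_{u:G_{\bc,q_2}(u)\equiv 0\,(q_1)}\e_{q_1}(u)$ is a genuine Sali\'e sum $S_\bc(\cdot,q_1)$; it does not reduce to the quadratic character $\chi_\bc(q_1)$ you propose to extract, and the twisted-character bound you quote does not directly apply to it.

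The paper's route is genuinely different. Rather than decomposing the exponential sum further, it packages the Sali\'e sum together with the oscillation $\e_{q/Q}(-\tfrac{\bc}{L}\cdot\bt)$ into $\CF_{\bt,\bc}(X)$ and proves the key non-trivial bound (Proposition~\ref{prop:sumSalie}) for this combined object via Lemma~\ref{le:sumSalie2}. To access the variable $\bt$ it opens the oscillatory integral by Fourier inversion, $\widetilde{\CI}_r(w;\bb)=\int P_r(t)\bigl(\int w_1(\bx)\e(tG(\bx)-\bu_r\cdot\bx)\,\mathrm{d}\bx\bigr)\mathrm{d}t$, and then runs Heath-Brown's stationary-phase localisation: after disposing of ``good'' pairs $(\by,t)$ by rapid decay, on the remaining ``bad'' pairs one does partial summation with the derivative landing on $P_r(t)/r$, whose $L^1$-norm in $t$ is controlled by Lemma~\ref{le:Prt}. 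This is precisely how the $r^{-1}$ obstruction of Lemma~\ref{le:easyhardpartial} is circumvented---not by finding a compensating phase, but by transferring the $r$-derivative onto the Fourier side of $h$ where it is harmless.
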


Before going into the proof of Theorem \ref{thm:cord}, we collect various ingredients on sums of Salié sums established in \cite{Huang}.
			For any $\bt\in\BR^3, \bc\in\BZ^3$, let \begin{equation}\label{eq:CFtc}
				\CF_{\bt,\bc}(X):=\sum_{\substack{q\leqslant X}}\frac{\e_{qL^2}(\bc\cdot\blambda_N)\widetilde{S}_{q}(\bc)\e_{\frac{q}{Q}}\left(-\frac{\bc}{L}\cdot\bt\right)}{q^2}.
			\end{equation} 			The following estimate is analogous to \cite[Theorem 4.1]{Huang}.
			\begin{proposition}\label{prop:sumSalie}
			Uniformly for any $\bt\in\BR^3$ and for any ordinary $\bc\in\BZ^3\setminus\{\boldsymbol{0}\}$, we have 
				$$\CF_{\bt,\bc}(X)\ll_{\varepsilon} |\bc|^{2+\varepsilon}\frac{X}{(\log X)^{\frac{1}{4}(1-\frac{\sqrt{2}}{2})-\varepsilon}}.$$ 
			\end{proposition}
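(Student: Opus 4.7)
The plan is to adapt the proof strategy of \cite[Theorem 4.1]{Huang} to the present setting, where the additional oscillatory weight $\e_{q/Q}(-\bc/L\cdot\bt)$ must be accommodated uniformly in $\bt$. First I would factorise $q=q_1q_2$ with $\gcd(q_1,q_2\Omega)=1$ and use the multiplicativity $\widetilde{S}_q(\bc)=\widetilde{S}_q^{(1)}(\bc)\widetilde{S}_q^{(2)}(\bc)$. By Lemma \ref{le:Kloosterman} (applied with small $\kappa$), one can restrict attention to those $q$ for which $q_2:=q_{m_0p_0\Omega}\leqslant X^\kappa$, at the cost of an admissible error of the form $X^{1-\kappa/4+\varepsilon}$. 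For each such fixed $q_2$, Lemma \ref{leS2bd} provides the pointwise bound $\widetilde{S}_q^{(2)}(\bc)\ll q_2^{5/2}$, thereby reducing the problem to estimating a sum over $q_1$ coprime to $q_2 m_0 p_0 \Omega$.

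For this inner $q_1$-sum, I would insert the explicit evaluation from Lemma \ref{le:S1}, producing the factor $\left(\frac{-m_0N\Delta_F}{q_1}\right)$ together with the root sum $\sum_{G_{\bc,q_2}(u)\equiv 0\bmod q_1}\e_{q_1}(u)$. Since $\bc$ is ordinary, $m_0\Delta_F F^*(\bc)$ is a non-square, so $G_{\bc,q_2}(T)$ is irreducible over $\BQ$ and the root sum is a genuine Salié sum whose contribution is encoded by the \emph{non-principal} real Kronecker character $\psi_\bc$ determined by $-m_0\Delta_F F^*(\bc)$. This brings the $q_1$-sum into the shape
$$\sum_{q_1\leqslant X/q_2}\frac{\psi_0(q_1)\psi_\bc(q_1)\alpha_{q_2,\bc}(q_1)}{q_1}\,\e_{q_1q_2/Q}\!\left(-\tfrac{\bc}{L}\cdot\bt\right),$$
where $\alpha_{q_2,\bc}(q_1)$ is a bounded multiplicative coefficient encoding the non-character part of the Salié evaluation together with $\e_{q_1L^2}(\bc\cdot\blambda_N)$. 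The crucial arithmetic input, exactly as in \cite[\S4]{Huang}, is a Siegel--Walfisz-type estimate for such twisted character sums against a non-principal real character, which saves $(\log X)^{-\frac{1}{4}(1-\frac{\sqrt{2}}{2})+\varepsilon}$.

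To handle the oscillatory weight $\e_{q_1q_2/Q}(-\bc/L\cdot\bt)$ I would apply Abel summation in $q_1$. Viewing this factor as a smooth function of a continuous variable $t\in[1,X/q_2]$, its logarithmic derivative is of size $O(|\bc||\bt|/Q)$, so partial summation against the Siegel--Walfisz bound on the partial sums transfers the logarithmic saving onto the full sum, at the cost of a polynomial factor in $|\bc|$. Summing the result over $q_2\leqslant X^\kappa$ dividing a power of $m_0p_0\Omega$ contributes only $O_\varepsilon(X^\varepsilon)$ by \eqref{eq:CAbd}-type estimates combined with Lemma \ref{leS2bd}; choosing $\kappa$ sufficiently small ensures the $X^{1-\kappa/4+\varepsilon}$ tail error is dominated by the main term $X(\log X)^{-\frac{1}{4}(1-\frac{\sqrt{2}}{2})+\varepsilon}$.

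The main obstacle is the required \emph{uniformity in} $\bt$. Since $\bt$ ranges over $\BR^3$ without restriction, the frequency of the oscillatory factor is unbounded, and careless partial summation would degrade the logarithmic saving. The resolution is to combine the pointwise bound on the partial sums with the derivative bound $O(|\bc||\bt|/Q)$, absorbing the growth in $|\bt|$ into the polynomial factor $|\bc|^{2+\varepsilon}$; the exponent $2$ is harmless for subsequent applications because the oscillatory integrals $\widetilde{\CI}_r(w;\bc/L)$ decay faster than any polynomial in $|\bc|$ via the simple and harder estimates of Lemma \ref{le:easyhardest}.
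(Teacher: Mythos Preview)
Your proposal has two genuine gaps that would prevent the argument from closing.

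First, the uniformity in $\bt$. Abel summation against the oscillatory weight $\e_{q_1q_2/Q}(-\tfrac{\bc}{L}\cdot\bt)$ produces a derivative of size $\asymp|\bc\cdot\bt|\,Q/(q_1^2q_2)$, so the resulting bound carries a factor of $|\bt|$; your suggestion to ``absorb the growth in $|\bt|$ into the polynomial factor $|\bc|^{2+\varepsilon}$'' cannot work, since $\bt$ and $\bc$ are independent parameters and the proposition requires uniformity over all of $\BR^3$. The paper never differentiates this factor at all: it sets $\hbar(n):=\psi_0(n)\chi(n)\e_{nq_2/Q}(-\tfrac{\bc}{L}\cdot\bt)$, notes that $|\hbar|\leqslant1$, and applies Lemma~\ref{le:sumSalie2} directly. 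Uniformity in $\bt$ is then automatic because that lemma is stated uniformly over all coefficient sequences $\hbar$ with $|\hbar|\leqslant 1$.

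Second, the cutoff and the $q_1$-dependence of $\widetilde S_q^{(2)}(\bc)$. You cannot bound $\widetilde S_q^{(2)}(\bc)\ll q_2^{5/2}$ pointwise and then ``reduce to a sum over $q_1$'' while retaining cancellation, because $\widetilde S_q^{(2)}(\bc)$ itself depends on $q_1$ through \eqref{eq:S2} and \eqref{eq:SCS}; the paper separates variables via the character expansion \eqref{eq:CSCA}, turning the $q_1$-dependence into a twist $\chi(q_1)$ that is absorbed into $\hbar$. Once this is done, the per-$q_2$ contribution is of order $q_2^{1/2}\cdot X(\log X)^{-c}$, which \emph{grows} with $q_2$; with your power cutoff $q_2\leqslant X^\kappa$ the main part is then $\gg X^{1+\kappa/2}(\log X)^{-c}$ for any fixed $\kappa>0$ and swamps the target bound. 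The paper instead uses the logarithmic cutoff $q_2\leqslant(\log X)^\beta$ via Lemma~\ref{le:Kloosterman2} (not Lemma~\ref{le:Kloosterman}), so the $q_2$-sum costs only $(\log X)^{O(\beta)}$, and then balances $\beta$ against the tail error $X(\log X)^{-\beta/4}$ to obtain the stated exponent.
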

		\begin{remark}
			For comparison, a stronger Linnik-type conjecture concerning sums of ``twisted Kloosterman sums'' is proposed in \cite[Conjecture 1.1]{B-K-S}. Proposition \ref{prop:sumSalie}, sufficient for the our purpose, may be viewed as an unconditional non-trivial bound for sums of ``twisted Salié sums''.
		\end{remark}
			The proof of Proposition \ref{prop:sumSalie} relies on the following variation of \cite[Proposition 2.3 \& Theorem 3.1 (1)]{Huang}.
			\begin{lemma}\label{le:Kloosterman2}
			Let $\kappa\geqslant 0$.	Uniformly for all ordinary $\bc$, we have
				$$\sum_{\substack{q\leqslant X\\ q_{m_0p_0\Omega}>(\log X)^\kappa}}\frac{\left|\widetilde{S}_{q}(\bc)\right|}{q^{2}}\ll_{\varepsilon}  |\bc|^{2+\varepsilon} X(\log X)^{-\frac{1}{4}\kappa}.$$
			\end{lemma}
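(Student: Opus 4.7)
The plan is to follow the strategy of \cite[Proposition 2.3 and Theorem 3.1(1)]{Huang}, adapted to the logarithmic threshold on $q_{m_0 p_0 \Omega}$ and to the ordinary hypothesis on $\bc$.

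First I would factor $q = q_1 q_2$ by setting $q_2 := q_{m_0 p_0 \Omega}$, the part of $q$ supported on primes dividing $m_0 p_0 \Omega$. Since $N = p_0^{2h}$, this ensures $\gcd(q_1, m_0 N \Omega) = 1$, so Lemma \ref{le:S1} applies and yields
\[
|\widetilde{S}_q^{(1)}(\bc)| = q_1^2\, |R_{\bc,q_2}(q_1)|, \qquad R_{\bc,q_2}(q_1) := \sum_{\substack{u \bmod q_1 \\ G_{\bc,q_2}(u) \equiv 0 \bmod q_1}} \e_{q_1}(u),
\]
while Lemma \ref{leS2bd} gives $|\widetilde{S}_q^{(2)}(\bc)| \ll q_2^{5/2}$. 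The hypothesis forces $q_2 > (\log X)^\kappa$, so the target sum is controlled by
\[
\sum_{\substack{q_2 \mid (m_0 p_0 \Omega)^\infty \\ q_2 > (\log X)^\kappa}} q_2^{1/2} \sum_{\substack{q_1 \leqslant X/q_2 \\ \gcd(q_1, q_2 m_0 p_0 \Omega) = 1}} |R_{\bc,q_2}(q_1)|.
\]

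Second, for ordinary $\bc$, $-m_0 \Delta_F F^*(\bc) \neq \square$, so the discriminant of $G_{\bc,q_2}$ is a non-zero non-square. For squarefree $q_1$ coprime to this discriminant, the number of roots of $G_{\bc,q_2}$ modulo $q_1$ equals $(1 \ast \chi_{\bc})(q_1)$, where $\chi_{\bc}(\cdot) := \bigl(\tfrac{-m_0 \Delta_F F^*(\bc)}{\cdot}\bigr)$ is a non-principal real character of conductor $\ll |\bc|^{O(1)}$. Bounding $|R_{\bc,q_2}(q_1)|$ by the number of roots and summing by means of the Dirichlet series factorisation (matching $\zeta(s) L(s, \chi_{\bc})$ up to finitely many tame Euler factors, the local corrections at primes dividing $F^*(\bc)$ being absorbed by a divisor bound on $|\bc|^2$), I obtain
\[
\sum_{q_1 \leqslant Y,\, \gcd(q_1, q_2 m_0 p_0 \Omega) = 1} |R_{\bc,q_2}(q_1)| \ll_\varepsilon |\bc|^{2+\varepsilon}\, Y,
\]
uniformly in $q_2$. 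Applied with $Y = X/q_2$ this bounds the inner sum by $|\bc|^{2+\varepsilon}\, X/q_2$.

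Third, I would execute the outer $q_2$-tail. Since $q_2$ is supported on the fixed finite set of primes dividing $m_0 p_0 \Omega$, for any $\delta < 1/2$ one has $\sum_{q_2 > T,\, q_2 \mid (m_0 p_0 \Omega)^\infty} q_2^{-1/2} \ll_\delta T^{-\delta}$, obtained by shifting to the exponent $-1/2 + \delta$ and invoking convergence of the full Euler product. Taking $\delta = 1/4$ and $T = (\log X)^\kappa$ produces the factor $(\log X)^{-\kappa/4}$; combined with the inner estimate this yields the claimed bound $|\bc|^{2+\varepsilon} X (\log X)^{-\kappa/4}$.

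The main obstacle I expect is ensuring that the $|\bc|^{2+\varepsilon}$ factor is genuinely uniform: the divisor-type upper bound on $\sum_{q_1} |R_{\bc,q_2}(q_1)|$ incurs local corrections at primes dividing the discriminant of $G_{\bc,q_2}$, which depends polynomially on $\bc$, and these corrections must be tamed uniformly in $q_2$. Note that no Siegel-zero difficulty arises here because only an upper bound on $L(1, \chi_{\bc})$ (trivial) is needed, not a lower bound; the ordinariness of $\bc$ enters only to guarantee non-principality of $\chi_{\bc}$, which prevents the Dirichlet series from acquiring a double pole at $s=1$ (which would otherwise inflate the leading constant by $\log Y$ and spoil the bookkeeping).
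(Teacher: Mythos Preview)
Your proposal is correct and follows essentially the same route as the paper. The paper itself does not give a self-contained argument: it simply says the proof of \cite[Proposition 2.3]{Huang} carries over once one works with the root-counting function $\varrho_{\bc}(n)=\#\{v\bmod n: v^2\equiv m_0\Delta_F F^*(\bc)\bmod n\}$ and notes that $\Upsilon_1(m_0N)=O(1)$ under \eqref{eq:Np0}. Your three steps---the factorisation $q=q_1q_2$ with $q_2=q_{m_0p_0\Omega}$, the estimate $\sum_{q_1\le Y}|R_{\bc,q_2}(q_1)|\ll_\varepsilon |\bc|^{2+\varepsilon}Y$ via $\varrho_{\bc}$, and the tail bound $\sum_{q_2>T,\,q_2\mid(m_0p_0\Omega)^\infty}q_2^{-1/2}\ll T^{-1/4}$---are exactly this. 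One small slip: the discriminant of $G_{\bc,l}$ is, up to a square, $m_0\Delta_F F^*(\bc)$ (no minus sign; compare \eqref{eq:polyG} and \eqref{eq:cord}), so your character should be $\bigl(\tfrac{m_0\Delta_F F^*(\bc)}{\cdot}\bigr)$, but this does not affect the argument.
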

			\begin{lemma}\label{le:sumSalie2}
				Let $\hbar:\BZ\to\BC$ be a function such that $|\hbar|\leqslant 1$. Consider
				\begin{equation}\label{eq:U}
					\CU_{\bc,l}(\hbar;X):=\sum_{\substack{n\leqslant X\\ \gcd(n,m_0p_0\Omega)=1}}\hbar(n)\sum_{\substack{v\bmod n\\ G_{\bc,l}(v)\equiv 0\bmod n}}\e_n(v),
				\end{equation} where we recall the polynomial $G_{\bc,l}(T)$ \eqref{eq:polyG}. Then uniformly for such $\hbar,l$ and ordinary $\bc$, we have
			$$\CU_{\bc,l}(\hbar;X)\ll_{\varepsilon} |\bc|^{2+\varepsilon} \frac{X}{(\log X)^{\frac{1}{4}(1-\frac{\sqrt{2}}{2})-\varepsilon}}.$$
			\end{lemma}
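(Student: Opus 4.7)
The plan is to adapt the proof of [Huang, Theorem 3.1(1)], tracking three new features specific to this setting: uniformity in the arbitrary bounded sequence $\hbar$, the dependence on $l$ through $G_{\bc,l}$, and the coprimality with $p_0$. I would begin by reducing to squarefree moduli: writing $n=k^2 m$ with $m$ squarefree, a standard Hensel-type lifting bound for the count of roots of $G_{\bc,l}$, combined with direct summation over $k\ge 2$, contributes $\ll |\bc|^{O(1)}X^{1-\delta}$ for some $\delta>0$, which is absorbed into the claimed error term.

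For squarefree $n$ with $\gcd(n,2m_0p_0\Omega)=1$, the hypothesis $\gcd(n,\Delta_F)=1$ allows the substitution $u=\Delta_F v$, transforming $G_{\bc,l}(v)\equiv 0\pmod n$ into $u^2\equiv D\pmod n$ with
$$D := (lL^2)^2 m_0 N \Delta_F F^*(\bc).$$
Since $N=p_0^{2h}$ and $(lL^2)^2$ are perfect squares, $D$ is non-square precisely when $m_0\Delta_F F^*(\bc)$ is, i.e.\ precisely under the ordinary hypothesis \eqref{eq:cord}. The inner sum becomes the classical Salié-type sum
$$\CT_n(D) := \sum_{u\bmod n,\, u^2\equiv D\pmod n} \e_n\bigl(\overline{\Delta_F}\, u\bigr),$$
which factorizes multiplicatively over $n$ via the Chinese remainder theorem. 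At each prime $p\mid n$ with $p\nmid 2D$, the local factor is $\e_p(\alpha_p u_p)+\e_p(-\alpha_p u_p)$ when $\chi_D(p)=1$ (for $u_p$ a local square root and $\alpha_p$ an explicit residue), and vanishes when $\chi_D(p)=-1$; here $\chi_D=\bigl(\tfrac{\cdot}{\,\cdot\,}\bigr)$ is the Jacobi symbol attached to the squarefree kernel of $D$, and is non-principal because $D$ is non-square.

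The remaining task is to estimate
$$\sum_{n\le X}\hbar(n)\mu^2(n)\,1_{(n,2m_0p_0\Omega)=1}\,\CT_n(D),$$
plus the non-squarefree remainder already absorbed. Since $\hbar$ is an arbitrary bounded sequence, I would apply Cauchy--Schwarz in $n$ against $|\hbar|^2$ to strip it off, reducing the problem to a second-moment estimate for $\CT_n(D)$. This second moment is then evaluated by a Selberg--Delange / Landau mean-value argument exploiting the non-principality of $\chi_D$ (of conductor $\ll (l|\bc|)^{O(1)}$), in exactly the form already carried out in [Huang, Theorem 3.1(1)]. That argument produces the savings $(\log X)^{-\frac14(1-\sqrt{2}/2)+\varepsilon}$; the factor $\frac14$ originates from the square-root loss in Cauchy--Schwarz, while the constant $1-\sqrt{2}/2$ reflects the density of squarefree integers all of whose prime factors lie in the half of primes with $\chi_D(p)\ne -1$. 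The factor $|\bc|^{2+\varepsilon}$ in the final bound collects the conductor of $\chi_D$ and the contribution from the $O(\log|\bc|)$ ``bad'' primes dividing $D$ but coprime to $2m_0p_0\Omega$.

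The main obstacle will be executing the second-moment step uniformly in $\hbar$ and $l$ while keeping the $|\bc|$-dependence polynomial. This requires separating bad primes (where the local Salié factor can be as large as $p$ rather than $O(1)$, and where no character-sum cancellation is available) from good primes (where the cosine oscillation produces the stated savings), and verifying that the cumulative bad-prime contribution absorbs into $|\bc|^{\varepsilon}$ rather than growing exponentially in $\omega(\bc)$. This is the delicate accounting performed in [Huang] that I would import essentially verbatim, replacing the specific sequence treated there by the general bounded $\hbar$ via the Cauchy--Schwarz step above, and noting that neither $l$ nor $p_0$ affects the character $\chi_D$ (since $(lL^2)^2$ is a square and $(n,p_0)=1$).
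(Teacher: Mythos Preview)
Your proposal is correct and follows the same route as the paper. The paper's own proof is a one-line reduction: after the substitution $u=\Delta_F v$ (and stripping the square factor $(lL^2p_0^h)^2$, which is invertible modulo $n$ in the intended application), one rewrites $\CU_{\bc,l}(\hbar;X)=\sum_{n}\hbar(n)S_{\bc}(\overline{\Delta_F}lL^2p_0^h,n)$ with $S_{\bc}(u,n)=\sum_{v^2\equiv m_0\Delta_F F^*(\bc)\ (n)}\e_n(uv)$, and then cites the proof of \cite[Theorem 3.1(1)]{Huang} verbatim, noting only that $\Upsilon_1(m_0N)=O(1)$ here. Your sketch expands what that cited argument actually does (reduction to squarefree moduli, Cauchy--Schwarz to remove $\hbar$, Selberg--Delange for the second moment governed by the non-principal character attached to $m_0\Delta_F F^*(\bc)$), which is consistent with the paper's intent.
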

		
	The proofs of \cite[Proposition 2.3 \& Theorem 3.1 (1)]{Huang} can readily be modified to be adapted to Lemmas \ref{le:Kloosterman2} and \ref{le:sumSalie2} upon considering  $$\varrho_{\bc}(n):=\#\{v\bmod n:v^2\equiv m_0\Delta_FF^*(\bc)\bmod n\},$$ and $$S_{\bc}(u,n):=\sum_{\substack{v\bmod n\\ v^2\equiv m_0\Delta_FF^*(\bc)\bmod n}}\e_{n}(uv),$$ 
	so that $$\CU_{\bc,l}(\hbar;X)=\sum_{\substack{n\leqslant X\\ \gcd(n,m_0p_0\Omega)=1}}\hbar(n)S_{\bc}(\overline{\Delta_F}lL^2p_0^h).$$
	Note that $\Upsilon_1(m_0N)=O(1)$. We omit the details.
			\begin{proof}[Sketch of proof of Proposition \ref{prop:sumSalie}]
			The proof is analogous to that of \cite[Theorem 4.1 (1)]{Huang}. Let $\CF_{\bt,\bc}^{(1)}(\beta;X)$ denote the $q$-sum in \eqref{eq:CFtc} with the extra condition $q_{m_0p_0\Omega}\leqslant (\log X)^\beta$. 
				Lemma \ref{le:Kloosterman2} implies that
				$$\CF_{\bt,\bc}(X)-\CF_{\bt,\bc}^{(1)}(\beta;X)\ll_\varepsilon |\bc|^{2+\varepsilon} \frac{X}{(\log X)^{\frac{\beta}{4}}}.$$
				
				For $u\in\BN$ and for any Dirichlet character $\chi\bmod uL^2$, we define $\hbar:\BN\to\BC$ by $$\hbar_u(n):=\psi_0(n)\chi(n)\e_{un/Q}\left(-\frac{\bc}{L}\cdot\bt\right).$$ Now we can decompose in similar way to \eqref{eq:CGq} (cf. also \cite[\S4.1]{Huang})
				$$\CF_{\bt,\bc}^{(1)}(\beta;X)=\sum_{\substack{q_2\leqslant (\log X)^\beta\\ q_2\mid(m_0p_0\Omega)^\infty}}\frac{1}{q_2^2}\sum_{\chi\bmod q_2L^2}\chi_2(p_0^h)\widetilde{\CA}_{q_2}(\chi;\bc)\CU_{\bc,q_2}\left(\hbar_{q_2},\frac{X}{q_2}\right).$$ Using directly Lemma \ref{leS2bd}, we have $$\widetilde{\CA}_{q_2}(\chi;\bc)\ll q_2^\frac{5}{2}.$$ Then by Lemma \ref{le:sumSalie2}, we have
				\begin{align*}
					\CF_{\bt,\bc}^{(1)}(\beta;X)\ll_\varepsilon|\bc|^{2+\varepsilon} \frac{X}{(\log X)^{1-\frac{\sqrt{2}}{2}-\varepsilon}}\sum_{\substack{q_2\leqslant (\log X)^\beta\\ q_2\mid(m_0p_0\Omega)^\infty}}q_2^{\frac{1}{2}}\ll  |\bc|^{2+\varepsilon} \frac{X}{(\log X)^{1-\frac{\sqrt{2}}{2}-\frac{3}{4}\beta-\varepsilon}}.
				\end{align*} 
				It remains to choose $\beta=1-\frac{\sqrt{2}}{2}$ to conclude.
			\end{proof}
	
\begin{proof}[Proof of Theorem \ref{thm:cord}]

We need to distinguish several ranges determined by the relative sizes of $q$ and $\bc$. 	 Let us fix $M_0\in\BN$ a reasonably large integer and $\varepsilon_0>0$ sufficiently small, whose values are to be specified at the end of the proof. 
			
			First consider the range $$q\in I_0:=\left[Q\min\left(|\bc|^{-M_0},(\log Q)^{-\varepsilon_0}\right),cQ\right),$$ where we recall that $c>0$ is fixed such that the function $r\mapsto \widetilde{\CI}_{r}\left(w;\frac{\bc}{L}\right)$ is supported in $(0,c)$. 
			We then have
\begin{equation}\label{eq:qsumI}
				\sum_{q\in I_0} \widetilde{\CG}_q(\bc)	=-\int_{I_0}\CF_{\boldsymbol{0},\bc}(t)\frac{\partial}{\partial t}\left(\frac{\widetilde{\CI}_{\frac{t}{Q}}\left(w;\frac{\bc}{L}\right)}{t}\right)\operatorname{d}t+\left[\CF_{\boldsymbol{0},\bc}(t)\frac{\widetilde{\CI}_{\frac{t}{Q}}\left(w;\frac{\bc}{L}\right)}{t}\right]_{\frac{Q}{\max\left(|\bc|^{M_0},(\log Q)^{\varepsilon_0}\right)}}^{cQ},
\end{equation} where we recall \eqref{eq:CFtc}.
  The argument of \cite[Proof of Proposition 5.5]{Huang} may be recycled, using only the ``simple estimate'' for the partial derivative of $\widetilde{\CI}_r$ in Lemma \ref{le:easyhardpartial} and also Lemma \ref{le:Kloosterman2} (with no condition on $q_{m_0p_0\Omega}$), the integral in \eqref{eq:qsumI} can be estimated as follows: 
			\begin{align*}
				\ll_\varepsilon &\frac{|\bc|^{2+\varepsilon}}{(\log Q)^{\frac{1}{4}(1-\frac{\sqrt{2}}{2})-\varepsilon}}\int_{I_0}t\left|\frac{\partial}{\partial t}\left(\frac{\widetilde{\CI}_{\frac{t}{Q}}\left(w;\frac{\bc}{L}\right)}{t}\right)\right|\operatorname{d}t\\ \ll_M &\frac{|\bc|^{2-M+\varepsilon}}{(\log Q)^{\frac{1}{4}(1-\frac{\sqrt{2}}{2})-\varepsilon}}\int_{I_0/Q}\frac{\operatorname{d}r}{r^2}\\ \ll & \frac{|\bc|^{2-M+\varepsilon}}{(\log Q)^{\frac{1}{4}(1-\frac{\sqrt{2}}{2})-\varepsilon}}\max\left(|\bc|^{M_0},(\log Q)^{\varepsilon_0}\right)\ll \frac{|\bc|^{2+M_0-M+\varepsilon}}{(\log Q)^{\frac{1}{4}(1-\frac{\sqrt{2}}{2})-\varepsilon_0-\varepsilon}}.
			\end{align*}
			Again using the simple estimate for $\widetilde{\CI}_r$ in Lemma \ref{le:easyhardest}, the variation term is easily seen to have at most the same order of magnitude as above. 
It remains to adjust the choice of $M$ in terms of $M_0$.
			
			We now focus on the range 
			\begin{equation}\label{eq:condkey}
			q\in I_1:=\left[1, Q\min\left(|\bc|^{-M_0},(\log Q)^{-\varepsilon_0}\right)\right),
			\end{equation} which necessitates a more demanding analysis, and it is from now on our argument diverges from \cite[Proof of Proposition 5.5]{Huang}. Our approach is a refinement of Heath-Brown's stationary phase analysis \cite[\S8]{H-Bdelta}. 
		
Fix $r>0$. On defining \begin{equation}\label{eq:Prt}
	P_{r}(t):=\int_{\BR}w_0(v)h(r,v)\e(-tv)\operatorname{d}v,
\end{equation} where $w_0$ is an appropriate weight function depending on $w$, by Fourier inversion, we can express the oscillatory integral as
$$\widetilde{\CI}_{r}\left(w;\bb\right)=\int_{\BR}P_{r}(t)\operatorname{d}t\int_{\BR^3}w_1(\bx)\e\left(tG(\bx)-\frac{\bb}{r}\cdot\bx\right)\operatorname{d}\bx,$$ where we write $$G(\bx):=F(\bx)-m_0$$ and $w_1(\bx)$ is another weight function (cf. \cite[p. 184]{H-Bdelta}).
We gather a few helpful estimates for \eqref{eq:Prt}.
\begin{lemma}\label{le:Prt}
	Uniformly for all $t\in\BR$, we have $$\max\left(P_r(t), r\frac{\partial P_r(t)}{\partial r}\right)\ll 1.$$ Moreover,
	$$\max\left(r\int_{\BR}|P_r(t)|\operatorname{d}t,r^2\int_{\BR}\left|\frac{\partial}{\partial r}P_r(t)\right|\operatorname{d}t\right)\ll 1.$$
\end{lemma}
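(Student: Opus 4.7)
The plan is to derive all four bounds from the pointwise derivative estimates that define Heath-Brown's class $\CH$ (see \cite[\S4]{H-Bdelta}), combined with the compact support of $w_0$ and elementary Fourier analysis. The axioms of $\CH$ supply, on any compact $v$-set, bounds of the shape $\partial_r^i\partial_v^j h(r,v)\ll_{i,j} r^{-1-i-j}$ in the relevant regime, and these will drive every step.

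First, for the two pointwise bounds I would argue directly. Since $w_0\,h(r,\cdot)$ is compactly supported and uniformly bounded in $r$, we have $|P_r(t)|\leq \|w_0\,h(r,\cdot)\|_{L^1}\ll 1$ uniformly in $t$. Differentiating under the integral sign,
$$r\frac{\partial P_r(t)}{\partial r}=\int_{\BR}w_0(v)\cdot r\frac{\partial h(r,v)}{\partial r}\,\e(-tv)\operatorname{d}v,$$
and the class-$\CH$ estimate $r\,\partial_r h(r,v)\ll 1$ on the support of $w_0$ yields $r\,\partial_r P_r(t)\ll 1$.

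For the $L^1$-in-$t$ bounds I would combine the trivial estimate on small $|t|$ with integration by parts in $v$ on large $|t|$. Writing
$$P_r(t)=\frac{1}{(2\pi i t)^N}\int_{\BR}\frac{\partial^N}{\partial v^N}\bigl(w_0(v)\,h(r,v)\bigr)\,\e(-tv)\operatorname{d}v,$$
the Leibniz rule together with $\partial_v^N h(r,v)\ll_N r^{-1-N}$ gives $|P_r(t)|\ll_N r^{-1}(r|t|)^{-N}$. Splitting the $t$-integral at $|t|=r^{-1}$ and taking $N\geq 2$ then yields $\int_{\BR}|P_r(t)|\operatorname{d}t\ll r^{-1}$; running the same argument with $\partial_r h$ in place of $h$, using the class-$\CH$ bound $\partial_v^N\partial_r h(r,v)\ll_N r^{-2-N}$, produces $\int_{\BR}|\partial_r P_r(t)|\operatorname{d}t\ll r^{-2}$. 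The main obstacle is purely bookkeeping: one must invoke the precise derivative bounds for $h\in\CH$ from \cite[Lemma 5]{H-Bdelta} and track how the Leibniz expansion of $\partial_v^N(w_0\,h)$ distributes $v$-derivatives between $w_0$ and $h$ so that every summand is controlled by the same negative power of $r$. No deeper analytic input, such as stationary phase, appears to be required.
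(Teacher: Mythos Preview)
Your strategy is the same as the paper's---integrate by parts in $v$, then split the $t$-integral at $|t|=r^{-1}$---but there is a genuine quantitative gap that costs one power of $r$ throughout.

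Your pointwise argument is internally inconsistent: taking $i=j=0$ in your own axiom $\partial_r^i\partial_v^j h\ll r^{-1-i-j}$ gives $h(r,v)\ll r^{-1}$, not $\ll 1$, so the crude $L^1(\operatorname{d}v)$ estimate only yields $|P_r(t)|\leqslant\|w_0 h(r,\cdot)\|_{L^1}\ll r^{-1}$. More importantly, with $|P_r(t)|\ll r^{-1}(r|t|)^{-N}$ (which is what your integration by parts against $\partial_v^N h\ll r^{-1-N}$ actually produces), the tail is
\[
\int_{|t|>r^{-1}} r^{-1-N}|t|^{-N}\,\operatorname{d}t=\frac{2}{N-1}\,r^{-2},
\]
and the small-$|t|$ part with your trivial bound is likewise $\asymp r^{-2}$. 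So your argument only establishes $\int|P_r|\,\operatorname{d}t\ll r^{-2}$, one power short of the claim; the same happens for $\partial_r P_r$.

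The paper does not attempt to rederive this from Lemma~5 alone; it invokes \cite[Lemma~17~(7.3)]{H-Bdelta} directly. Heath-Brown's Lemma~17 uses finer structural input than the crude pointwise bounds $\partial_v^j h\ll r^{-1-j}$---essentially that $h(r,\cdot)$ is concentrated (in an $L^1$ sense) on $|v|\asymp r$, so that $\|\partial_v^N(w_0 h)\|_{L^1(\operatorname{d}v)}\ll r^{-N}$ rather than $r^{-1-N}$. This extra saving is not ``purely bookkeeping'': it cannot be recovered from the Leibniz expansion with only the estimates you quote. And the pointwise bound $P_r(t)\ll 1$ is genuinely needed downstream---the bad-pair $r$-integral in the proof of Theorem~\ref{thm:cord} diverges near $r=0$ if one only has $P_r(t)\ll r^{-1}$.
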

\begin{proof}[Sketch of proof of Lemma \ref{le:Prt}]
	Indeed, as shown by \cite[Lemma 5]{H-Bdelta} that for $k=0,1$, the function $v\mapsto r^{k+1}\frac{\partial^k}{\partial r^k}h(r,v)$ is of class $\CH$. Also by \cite[Lemma 17 (7.3)]{H-Bdelta}, we have
	 $$\int_{\BR} \left|P_{r}(t)\right|\operatorname{d}t \ll \int_{0}^{\frac{1}{r}}\operatorname{d}t+\frac{1}{r^2}\int_{\frac{1}{r}}^{\infty} \frac{1}{t^2}\operatorname{d}t\ll \frac{1}{r}.$$ Similarly we can deduce the corresponding estimates for the partial derivative of $P_r(t)$.
\end{proof}

In what follows for any fixed $r=\frac{q}{Q}>0$ and ordinary $\bc$, we write \begin{equation}\label{eq:u}
	\bu_r=\bu_r(\bc):=\frac{\bc}{Lr}.
\end{equation}
We first observe that, if \begin{equation}\label{eq:tu1}
	|t|\leqslant C_1 |\bu_r|,
\end{equation} for an explicit uniform constant $C_1>0$, then uniformly for $\bx\in\operatorname{Supp}(w_1)$ (in particular $\nabla G(\bx)\gg 1$), the first derivative of the oscillatory term satisfies
$$\left|t\nabla G(\bx)-\bu_r\right|\gg |\bu_r|,$$ and the second partial derivative (determined by the Hessian matrix) has norm $\asymp |t|\ll |\bu_r|$. So by \cite[Lemma 10]{H-Bdelta}, under \eqref{eq:tu1} rapid decay applies: $$\int_{\BR^3}w_1(\bx)\e\left(t(F(\bx)-m_0)-\frac{\bb}{r}\cdot\bx\right)\operatorname{d}\bx\ll_M |\bu_r|^{-M}$$ This is analogous to the case where we apply the ``easy estimate''. We thus have that the $q$-sum contributes 
\begin{align*}
	&\ll_M \sum_{q\ll Q(\log Q)^{-\varepsilon_0}} \frac{\left|\widetilde{S}_{q}(\bc)\right|}{q^3}|\bu_{\frac{q}{Q}}|^{-M}\int_{\BR}\left|P_{\frac{q}{Q}}(t)\right|\operatorname{d}t\\ &\ll |\bc|^{-M} (\log Q)^{-\varepsilon_0(M-1)}\sum_{q\ll Q} \frac{\left|\widetilde{S}_{q}(\bc)\right|}{q^3}\ll_\varepsilon |\bc|^{2-M+\varepsilon} (\log Q)^{-\varepsilon_0(M-1)+1}
\end{align*}
by Lemmas \ref{le:Kloosterman2} (with partial summation) and \ref{le:Prt}. 

We henceforth proceed under the assumption
\begin{equation}\label{eq:tu2}
		|t|>C_1 |\bu_r|.
\end{equation} The dissection process in \cite[p. 185--186]{H-Bdelta} shows that, we can express for any $\bx\in\operatorname{Supp}(w_1)$, $$w_1(\bx)=\frac{1}{\delta_0^3}\int_{\BR}w_{\delta_0}\left(\frac{\bx-\by}{\delta_0},\by\right)\operatorname{d}\by,$$ where $\delta_0>0$ and $w_{\delta_0}$ is constructed in \cite[Lemma 2]{H-Bdelta} depending on $\delta_0$. We exchange $\bx$ to $\bz$ bearing the relation 
\begin{equation}\label{eq:relationxyz}
\bx=\by+\delta_0\bz.
\end{equation}
We choose $$\delta_0=\frac{C_2}{|t|^{\frac{1}{2}}}$$ for a sufficiently small but uniform constant $C_2>0$, so that for every $\by$ which is within distance $\delta_0$ of a point $\bx\in\operatorname{Supp}(w)$, one has $|\nabla G(\by)|\gg 1$ thanks to \eqref{eq:condkey} \eqref{eq:tu2}. Note that our choice of $\delta_0$ is different from \cite{H-Bdelta} but essentially equivalent. The advantage here is that $\delta_0$ is independent of $\bu_r$ (hence independent of $r$).

For every fixed $r>0$, we let \begin{equation}\label{eq:R}
	R_r(\bc):=|\bu_r|^{\varepsilon_1},
\end{equation} for a fixed $0<\varepsilon_1<\frac{1}{3}$. In particular, we have $R_r(\bc)^3\leqslant |\bu_r|$, and the condition $r\leqslant (\log Q)^{-\varepsilon_0}$ resulting from \eqref{eq:condkey} guarantees that	$R_r(\bc)\gg (\log Q)^{\varepsilon_0\varepsilon_1}$.
We call a pair $(\by,t)$ \emph{good} (with respect to $\bc,r$) if \begin{equation}\label{eq:goodpair}
	|t\nabla F(\by)-\bu_r|\geqslant R_r(\bc)|t|^{\frac{1}{2}}.
\end{equation} As in \cite[p. 186]{H-Bdelta}, for every such fixed good pair $(\by,t)$, the first derivative of the function $tG(\by+\delta_0\bz)-\bu_r\cdot(\by+\delta_0\bz)$ with respect to $\bz$ is $\gg R_r(\bc)$, while the second derivative is $O(1)$, hence the first derivative test \cite[Lemma 10]{H-Bdelta} again applies  and yields that for any fixed good pair $(\by,t)$, \begin{equation}\label{eq:goodpaircons}
\int_{\BR^3} w_{\delta_0}(\bz,\by)\e\left(tG(\bx)-\bu_r\cdot\bx\right)\operatorname{d}\bz\ll_M R_r(\bc)^{-M}.
\end{equation}
Going back to the $q$-sum to be dealt with, since $|\by|=O(1)$, thanks to the rapid decay \eqref{eq:goodpaircons}, the contribution from ``good pairs'' (with respect to $\bc$ and all $r=\frac{q}{Q}\in I_1/Q$) is bounded by
\begin{align*}
	&\ll_M \sum_{q\ll Q(\log Q)^{-\varepsilon_0}} \frac{\left|\widetilde{S}_{q}(\bc)\right|}{q^3}R_{\frac{q}{Q}}(\bc)^{-M}\int_{\BR}\left|P_{\frac{q}{Q}}(t)\right|\operatorname{d}t\\ &\ll |\bc|^{-\varepsilon_1 M}(\log Q)^{-\varepsilon_0( M-1)}\sum_{q\ll Q} \frac{\left|\widetilde{S}_{q}(\bc)\right|}{q^3}\\ &\ll_{\varepsilon}|\bc|^{2-\varepsilon_1 M+\varepsilon}(\log Q)^{-\varepsilon_0 (M-1)+1},
\end{align*} by Lemmas \ref{le:Kloosterman2} and \ref{le:Prt}. We take $M$ large enough (depending on $\varepsilon_0,\varepsilon_1$) to ensure that the exponents appearing above are sufficiently negative.  

We now turn to analysing the \emph{bad} pairs $(\by,t)$ (with respect to $\bc,r$) satisfying 
\begin{equation}\label{eq:badpair}
	|t\nabla G(\by)-\bu_r|<R_r(\bc)|t|^{\frac{1}{2}}.
\end{equation}as opposed to \eqref{eq:goodpair}.
Following the discussion in \cite[p. 186--187]{H-Bdelta}, since $|\nabla G(\by)|\asymp 1$ and $R_r(\bc)\ll |\bu_r|^{\frac{1}{3}}\ll t^{\frac{1}{3}}$ by \eqref{eq:tu2}, we must have 
\begin{equation}\label{eq:bad1}
	|t|\asymp |\bu_r|.
\end{equation}
Moreover, for any fixed $\bc,r,t$, by \cite[Lemma 21]{H-Bdelta}, the area of $\by$ satisfying \eqref{eq:badpair} is \begin{equation}\label{eq:badarea}
 \int_{\substack{\by\in\BR^3\\ \eqref{eq:badpair} \text{ holds}}}\ll  |t|^{-\frac{3}{2}}R_r(\bc)^3.
\end{equation}  
Turning to the $q$-sum, for any fixed $\bc,\by,t$, let $\mathfrak{I}=\mathfrak{I}(\bc;\by;t)$ be the interval of $q$ determined by the conditions \eqref{eq:condkey} \eqref{eq:tu2} \eqref{eq:badpair}, whenever non-empty. Moreover for fixed $\bz$, we let (recall the relation \eqref{eq:relationxyz}) $$\CG(\bx,t;\bc):=\sum_{q\in \mathfrak{I}}\frac{\e_{qL^2}(\bc\cdot\blambda_N)\widetilde{S}_{q}(\bc)\e_{\frac{q}{Q}}\left(-\frac{\bc}{L}\cdot\bx\right)}{q^2}\times\frac{P_{\frac{q}{Q}}(t)}{q}.$$ 
Then the $q$-sum within the range \eqref{eq:condkey} against ``bad pairs'' is expressed as
\begin{align}
	&\sum_{q\in I_1}\frac{\e_{qL^2}(\bc\cdot\blambda_N)\widetilde{S}_{q}(\bc)}{q^3}\iint_{\substack{(\by,t) \text{ bad}\\ \eqref{eq:tu2}\text{ holds}}}P_{\frac{q}{Q}}(t)\operatorname{d}\by\operatorname{d}t\int_{\bz\in\BR^3}w_{\delta_0}(\bz,\by)e\left(tG(\bx)-\bu_r\cdot\bx\right)\operatorname{d}\bz\nonumber\\ =& \iiint_{(t,\by,\bz)\in\BR\times\BR^3\times\BR^3}w_{\delta_0}(\bz,\by)e\left(tG(\bx)\right)\CG(\bx,t;\bc)\operatorname{d}t\operatorname{d}\by\operatorname{d}\bz. \label{eq:intbadpair}
\end{align}
Then on recalling \eqref{eq:CFtc}, partial summation gives
\begin{equation}\label{eq:CGpartial}
	\CG(\bx,t;\bc)=-\frac{1}{Q}\int_{\mathfrak{I}/Q}\CF_{\bx,\bc}(rQ)\frac{\partial}{\partial r}\left(\frac{P_{r}(t)}{r}\right)\operatorname{d}r +O\left(\sup_{s\in\mathfrak{I}}\left|\CF_{\bx,\bc}(s)\frac{P_{\frac{s}{Q}}(t)}{s}\right|\right).
\end{equation}
Bringing \eqref{eq:CGpartial}  back to \eqref{eq:intbadpair} and exchanging the order of integration back, on using \eqref{eq:badarea}, Proposition \ref{prop:sumSalie} and Lemma \ref{le:Prt}, the overall contribution of the integral is

 \begin{align*}
 	&= -\frac{1}{Q}\int_{r\in I_1/Q}\operatorname{d}r\iint_{\substack{(\by,t) \text{ bad}\\ \eqref{eq:tu2}\text{ holds}}}\frac{\partial}{\partial r}\left(\frac{P_{r}(t)}{r}\right)\operatorname{d}\by\operatorname{d}t\int_{\bz\in\BR^3}\CF_{\bx,\bc}(rQ)w_{\delta_0}(\bz,\by)e\left(tG(\bx)\right)\operatorname{d}\bz\\
 	&\ll_{\varepsilon} \frac{|\bc|^{2+\varepsilon}}{(\log Q)^{\frac{1}{4}(1-\frac{\sqrt{2}}{2})-\varepsilon}}\int_{r\in I_1/Q}r\operatorname{d}r \int_{\substack{t\in\BR\\ \eqref{eq:bad1}\text{ holds}}}\left|\frac{\partial}{\partial r}\left(\frac{P_{r}(t)}{r}\right)\right| \operatorname{d}t\int_{\substack{\by\in\BR^3\\ \eqref{eq:badpair} \text{ holds}}}\operatorname{d}\by\\ &\ll  \frac{|\bc|^{2+\varepsilon}}{(\log Q)^{\frac{1}{4}(1-\frac{\sqrt{2}}{2})-\varepsilon}}\int_{r\in I_1/Q}rR_r(\bc)^3 \operatorname{d}r \int_{\substack{t\in\BR\\ \eqref{eq:bad1}\text{ holds}}}\left|\frac{\partial}{\partial r}\left(\frac{P_{r}(t)}{r}\right)\right|t^{-\frac{3}{2}} \operatorname{d}t\\ &\ll \frac{|\bc|^{2+\varepsilon}}{(\log Q)^{\frac{1}{4}(1-\frac{\sqrt{2}}{2})-\varepsilon}}\int_{0}^{\max\left(|\bc|^{M_0},(\log Q)^{\varepsilon_0}\right)^{-1}}rR_r(\bc)^3\times r^{-2}|\bu_r|^{-\frac{1}{2}}\operatorname{d}r\\ &\ll \frac{|\bc|^{\frac{3}{2}+3\varepsilon_1+\varepsilon}}{(\log Q)^{\frac{1}{4}(1-\frac{\sqrt{2}}{2})-\varepsilon}}\int_{0}^{|\bc|^{-M_0}}\frac{1}{r^{\frac{1}{2}+3\varepsilon_1}}\operatorname{d}r\\ &\ll |\bc|^{\frac{3}{2}+3\varepsilon_1-M_0(\frac{1}{2}-3\varepsilon_1)+\varepsilon}(\log Q)^{-\frac{1}{4}(1-\frac{\sqrt{2}}{2})+\varepsilon}.
 \end{align*}
 
 We now estimate the overall contribution of the variation term in \eqref{eq:CGpartial}. For each individual $t\in\BR$ and $r_0\in I_1/Q$, on using Proposition \ref{prop:sumSalie}, we have 
 $$\CF_{\bx,\bc}(r_0Q)\frac{P_{r_0}(t)}{r_0Q}\ll_\varepsilon \frac{|\bc|^{2+\varepsilon}|P_{r_0}(t)|}{(\log Q)^{\frac{1}{4}(1-\frac{\sqrt{2}}{2})-\varepsilon}}.$$
Our argument above, on using again Lemma  \ref{le:Prt}, shows that the overall contribution of bad pairs for such an $r_0$ is
\begin{align*}
	&\ll_{\varepsilon} \frac{|\bc|^{2+\varepsilon}}{(\log Q)^{\frac{1}{4}(1-\frac{\sqrt{2}}{2})-\varepsilon}}\int_{\substack{t\in\BR\\ \eqref{eq:bad1}\text{ holds}}} \left|P_{r_0}(t)\right|\operatorname{d}t\int_{\substack{\by\in\BR^3\\ \eqref{eq:badpair} \text{ holds}}}\operatorname{d}\by\\ &\ll |\bc|^{\frac{3}{2}+3\varepsilon_1+\varepsilon} r_0^{\frac{1}{2}-3\varepsilon_1} (\log Q)^{-\frac{1}{4}(1-\frac{\sqrt{2}}{2})+\varepsilon}.
\end{align*}
This bound being an ascending function on $r_0$, on taking $r_0=\max\left(|\bc|^{M_0},(\log Q)^{\varepsilon_0}\right)^{-1}$, we get the same upper bound as above. 

Finally it remains to carefully adjust the choice of $M_0,\varepsilon_0,\varepsilon_1$, thereby completing the proof.
\end{proof}
 				
 	\begin{remark}\label{rmk:ordc}
 				We now explain the crux of the issue when summing over relatively small $q$ (i.e. within the range \eqref{eq:condkey}) in the proof of Theorem \ref{thm:cord}. Following the argument in \cite[p. 182--183]{H-Bdelta}, (as explained before Lemma \ref{le:easyhardpartial}) the derivative of the oscillatory factor $\e_{r}(\star)$ produces $r^{-2}$ and due to the inhomogeneous equation $F(\bx)-m_0$ it is unclear how to eliminate its appearance, contrary to the homogeneous case. If one runs partial summation in the naive way, this leads to divergence of various integrals against the partial derivative of oscillatory integrals with respect to $r$ around $0$. Our idea to resolve this problem is, whenever rapid decay furnished by the first derivative test of the oscillatory integral is not available, where the ``harder estimate'' is brought into play, we run partial summation in a way that encapsulates the oscillatory factor by means of Proposition \ref{prop:sumSalie}, while putting the partial derivative directly into (the Fourier transform of) the function $h$. 
 			\end{remark}

 		\subsection{Proof of Theorem \ref{thm:cneq0sum}}
 		Again using the ``easy estimate'' in Lemma \ref{le:easyhardest}, we can restrict the $\bc$-sum to those $|\bc|\ll_{\varepsilon} Q^\varepsilon$ with an arbitrarily high power-saving error term. 
 		
 		For ordinary $\bc$'s, by Theorem \ref{thm:cord}, upon choosing the integer $M>0$ large enough, the subsequent $\bc$-sum is clearly convergent, and hence the contribution of these $\bc$ is $\ll \frac{N^\frac{3}{2}}{(\log N)^{\frac{1}{4}(1-\frac{\sqrt{2}}{2})-A_0}}$. 
 		
 		It remains to sum over the exceptional $\bc$'s. In view of Theorem \ref{thm:exec}, we define 
 		$$\CK_h:=\frac{1}{L^6}\sum_{\substack{\bc\in\BZ^3\setminus\{\boldsymbol{0}\}\\\text{exceptional}}}\widetilde{\eta}_h(\bc),$$ which according to \eqref{eq:etacbd} is clearly convergent, and the $\bc$-sum with the restriction $|\bc|\ll_{\varepsilon} Q^\varepsilon$ approximates $\CK_h(\bc)$ also with an arbitrarily high power-saving error term. Summing over them adds to the error term $O_\varepsilon(Q^{-\lambda_0+\varepsilon})$ which becomes $O_\varepsilon(N^{\frac{3}{2}-\lambda_0+\varepsilon})$ in \eqref{eq:execstep} and is negligible compared to the log-saving. \qed
 		
 \subsection{The case $\bc=\boldsymbol{0}$}\label{se:c=0}
We shall only give the detailed proof assuming \eqref{eq:notsq} and sketch the proof for the case \eqref{eq:m0deltasq} which is analogous to \cite{Huang}.
 \begin{proposition}\label{prop:singnonsq}
 	 Assume \eqref{eq:notsq}. We then have
 	 $$\sum_{q\leqslant X}\frac{\widetilde{S}_{q}(\boldsymbol{0})}{q^3}=L^4\BL(1,\psi_0)\widetilde{\mathfrak{S}}_F(p_0;L,\blambda)+O_\varepsilon(X^{-\frac{1}{2}+\varepsilon}),$$
 	 $$\sum_{q\leqslant X}\widetilde{S}_{q}(\boldsymbol{0})\ll_{\varepsilon} X^{\frac{5}{2}+\varepsilon}.$$
 \end{proposition}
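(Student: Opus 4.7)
The plan exploits the multiplicative decomposition $\widetilde{S}_q(\boldsymbol{0}) = \widetilde{S}_q^{(1)}(\boldsymbol{0})\widetilde{S}_q^{(2)}(\boldsymbol{0})$ with the canonical splitting $q=q_1q_2$, where $q_2$ is the $\Omega$-part of $q$ and $\gcd(q_1,\Omega)=1$. First I would specialise Lemma \ref{le:S1} at $\bc=\boldsymbol{0}$: since $F^*(\boldsymbol{0})=0$ we have $G_{\boldsymbol{0},q_2}(T)=(\Delta_F T)^2$, and the coprimality $\gcd(q_1,\Omega)=1$ reduces the congruence $G_{\boldsymbol{0},q_2}(u)\equiv 0\pmod{q_1}$ to $u^2\equiv 0\pmod{q_1}$. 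A routine prime-power computation shows $\sum_{u\bmod q_1,\,q_1\mid u^2}\e_{q_1}(u)=\mu^2(q_1)$, vanishing unless $q_1$ is squarefree. Combined with $\bigl(\tfrac{N}{q_1}\bigr)=1$ (since $N=p_0^{2h}$ is a perfect square), this yields $\widetilde{S}_q^{(1)}(\boldsymbol{0})=\mu^2(q_1)\,\psi_0(q_1)\,q_1^2$.

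For the first estimate, I would write
\begin{equation*}
\sum_{q\leq X}\frac{\widetilde{S}_q(\boldsymbol{0})}{q^3} = \sum_{\substack{q_2\mid \Omega^\infty\\q_2\leq X}}\frac{\widetilde{S}_{q_2}^{(2)}(\boldsymbol{0})}{q_2^3}\sum_{\substack{q_1\leq X/q_2\\(q_1,\Omega)=1}}\frac{\mu^2(q_1)\psi_0(q_1)}{q_1}.
\end{equation*}
Under \eqref{eq:notsq}, $\psi_0$ is a non-principal character of fixed conductor, so the P\'olya--Vinogradov inequality applied via $\mu^2(n)=\sum_{d^2\mid n}\mu(d)$ gives $\sum_{n\leq Y}\mu^2(n)\psi_0(n)\ll_\varepsilon Y^{1/2+\varepsilon}$. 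Partial summation then identifies the inner $q_1$-sum as $\prod_{p\nmid\Omega}(1+\psi_0(p)/p)$ up to an error $O_\varepsilon((X/q_2)^{-1/2+\varepsilon})$. By Lemma \ref{leS2bd} each term of the outer sum is $\ll q_2^{-1/2}$, so the $q_2$-sum converges absolutely; the tail $q_2>X$ together with the propagated remainder give total error $O_\varepsilon(X^{-1/2+\varepsilon})$. To identify the leading constant with $L^4\BL(1,\psi_0)\widetilde{\mathfrak{S}}_F(p_0;L,\blambda)$, I would rearrange
\begin{equation*}
\prod_{p\nmid\Omega}\Bigl(1+\tfrac{\psi_0(p)}{p}\Bigr) = \BL(1,\psi_0)\prod_{p\mid\Omega}\Bigl(1-\tfrac{\psi_0(p)}{p}\Bigr)\prod_{p\nmid\Omega}\Bigl(1-\tfrac{1}{p^2}\Bigr),
\end{equation*}
and match each $\widetilde{S}_{q_2}^{(2)}(\boldsymbol{0})/q_2^3$ at bad primes $p\mid\Omega$, $p\neq p_0$, against $\sigma_p(\CV_1;(L,\blambda))$ via a Hensel-type computation; crucially, the $p_0$-local factor collapses to $\sigma_{p_0}(\CV_0)$ rather than $\sigma_{p_0}(\CV_1)$, because $N=p_0^{2h}$ forces the $p_0$-adic fibers of $(F=m_0N)$ to degenerate onto the cone $(F=0)$.

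For the second estimate, the same factorisation combined with partial summation from $\sum_{q_1\leq Y}\mu^2(q_1)\psi_0(q_1)\ll Y^{1/2+\varepsilon}$ yields $\sum_{q_1\leq Y}\mu^2(q_1)\psi_0(q_1)q_1^2\ll Y^{5/2+\varepsilon}$; coupled with $|\widetilde{S}_{q_2}^{(2)}(\boldsymbol{0})|\ll q_2^{5/2}$ and the convergence of $\sum_{q_2\mid\Omega^\infty}q_2^{-\varepsilon}$, this gives $\sum_{q\leq X}\widetilde{S}_q(\boldsymbol{0})\ll_\varepsilon X^{5/2+\varepsilon}$. The cancellation from non-principality of $\psi_0$ is essential here — the trivial bound $|\widetilde{S}_q^{(1)}(\boldsymbol{0})|\ll q_1^2$ alone would only give $X^3$. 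The main obstacle is the explicit Euler-factor identification step, particularly the $p_0$-degeneration to $\sigma_{p_0}(\CV_0)$ and the precise $L^4$ normalisation arising from the $L$-congruence moduli implicit in $\widetilde{S}^{(2)}$.
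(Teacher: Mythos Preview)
Your approach is correct and reaches the same conclusions, but it proceeds more elementarily than the paper. The paper packages everything into the Dirichlet series $\varPi(s)=\sum_q \widetilde{S}_q(\boldsymbol{0})q^{-s}$, factors it as $\BL(s-2,\psi_0)\,\widetilde{\nu}(s)$ with $\widetilde{\nu}$ holomorphic and bounded for $\Re(s)>\tfrac{5}{2}$ by Lemma~\ref{leS2bd}, identifies $\widetilde{\nu}(3)=L^4\widetilde{\mathfrak{S}}_F(p_0;L,\blambda)$ via exactly the Hensel/density computations you outline (including the $p_0$-adic degeneration to $\sigma_{p_0}(\CV_0)$), and then invokes Perron's formula, citing \cite[Lemmas~30 \&~31]{H-Bdelta}, to read off both asymptotics at once. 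Your direct route via P\'olya--Vinogradov for $\sum_{n\leq Y}\mu^2(n)\psi_0(n)\ll Y^{1/2+\varepsilon}$ followed by partial summation avoids contour integration entirely and makes the role of the non-principality of $\psi_0$ very explicit; the price is that the Euler-product bookkeeping (the $L^4$ normalisation, the good-prime factor $\prod_{p}(1-p^{-2})$, and the matching at bad primes) has to be done by hand rather than falling out automatically from the factorisation of $\varPi(s)$.

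One small technical correction: Lemma~\ref{le:S1} requires $(q_1,m_0N)=1$, not merely $(q_1,\Omega)=1$. So your splitting should take $q_2$ to be the $m_0p_0\Omega$-part of $q$ (as the paper does in its Euler product), otherwise the clean formula $\widetilde{S}_q^{(1)}(\boldsymbol{0})=\mu^2(q_1)\psi_0(q_1)q_1^2$ is not guaranteed at primes dividing $m_0p_0$ but not $\Omega$. This only enlarges the finite set of bad primes and changes nothing structurally in your argument.
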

\begin{proof}
 By  the Chinese remainder theorem, $\widetilde{S}_{q}(\boldsymbol{0})$ is multiplicative in $q$.
 By Lemma \ref{le:S1}, for $q\in\BN$ with $(q,m_0p_0\Omega)=1$, we can compute
 $$\widetilde{S}^{(1)}_{q}(\boldsymbol{0})=\iota_q^3\left(\frac{\Delta_F}{q}\right)\sum_{\substack{a\bmod q\\ (a,q)=1}}\left(\frac{a}{q}\right)\e_{q}(-am_0p_0^{2h})=q^2\mu^2(q)\psi_0(q).$$
 Therefore, the formal series
 $\varPi(s):=\sum_{q=1}^\infty \frac{\widetilde{S}_{q}(\boldsymbol{0})}{q^s}$ factorises as $$\varPi(s)=\BL(s-2,\psi_0)\widetilde{\nu}(s),$$ where by Lemma \ref{leS2bd},
 \begin{align*}
 	\widetilde{\nu}(s)&:=\prod_{p<\infty}\left(1-\frac{\psi_0(p)}{p^{s-2}}\right)\sum_{t=0}^{\infty}\frac{\widetilde{S}_{p^t}(\boldsymbol{0})}{p^{ts}}\\ &=\prod_{p\nmid m_0p_0\Omega}\left(1-\frac{1}{p^{2(s-2)}}\right)\times\prod_{p\mid m_0p_0\Omega}\left(1-\frac{\psi_0(p)}{p^{s-2}}\right)\sum_{t=0}^{\infty}\frac{\widetilde{S}_{p^t}(\boldsymbol{0})}{p^{ts}}
 \end{align*}  is absolutely convergent for $\Re(s)>\frac{5}{2}$.

We now compute explicitly the $p$-adic densities above. 
First it is easy to see that when $p\neq p_0$, we have, on writing $m_p:=\operatorname{ord}_{p}(L)$,
\begin{align*}
	&\sum_{t=0}^{\infty}\frac{\widetilde{S}_{p^t}(\boldsymbol{0})}{p^{3t}}\\=&\lim_{k\to\infty}\frac{\#\{\bx\in(\BZ/p^{k+2m_p}\BZ)^3:F(\bx)\equiv m_0p_0^{2h}\bmod p^{k+2m_p},\bx\equiv p_0^{h}\blambda\bmod p^{m_p}\}}{p^{2k}}\\ =&p^{4m_p}\sigma_{p}(\CV_1;(L,\blambda)),
\end{align*}
by the non-singular change of variable $\bx\mapsto p_0^{h}\bx$.

We now turn to analysing the $p_0$-adic factor. By Hensel's lemma (see also \cite[(20.128) p. 479]{Iwaniec-Kolwalski}), there exists $D\in\BN$ depending only on the quadratic form $F$ (only need $F\bmod D$ is geometrically integral) and $L$ such that $L\mid D$ and for every $E\in\BN$ with $D\mid E$ and for every $M\in\BZ$,  we have
\begin{align*}
	&\#\{\bx\bmod E:F(\bx)\equiv M\bmod E,\bx\equiv \blambda\bmod L\}\\ =&\left(\frac{E}{D}\right)^2\#\{\bx'\bmod D:F(\bx')\equiv M\bmod D,\bx'\equiv \blambda\bmod L\}.
\end{align*}
In particular, extracting the $p_0$-adic part by the Chinese remainder theorem,  this shows that, whenever $2h>k_0:=\operatorname{ord}_{p_0}(D)$ \begin{align*}
	\sum_{t=0}^{\infty}\frac{\widetilde{S}_{p^t}(\boldsymbol{0})}{p^{3t}}&=\lim_{k\to\infty}\frac{\#\{\bx\in(\BZ/p_0^k\BZ)^3:F(\bx)\equiv m_0p_0^{2h}\bmod p_0^k\}}{p_0^{2k}}\\ &=\frac{\#\{\bx\in(\BZ/p_0^{k_0}\BZ)^3:F(\bx)\equiv m_0p_0^{2h}\bmod p_0^{k_0}\}}{p_0^{2k_0}}=\sigma_{p_0}(\CV_0).
\end{align*}
Therefore  we have $$\widetilde{\nu}(3)=L^4\widetilde{\mathfrak{S}}_F(p_0;L,\blambda).$$
The asymptotics above follow from Perron's formula as in \cite[Proof of Lemmas 30 \& 31]{H-Bdelta}. (Because in the first case $\varPi(3)$ is the residue of the simply pole $\varPi(s+3)X^s/s$ at $s=0$, and in the second case $\varPi(s)X^s/s$ has no pole for $\frac{5}{2}<\Re s$ and hence has residue zero.) \end{proof}

\begin{proof}[Proof of Proposition \ref{prop:c=0} assuming \eqref{eq:notsq}]
	We fix $\varepsilon_0>0$ and write $A_t=\sum_{q\leqslant t}\widetilde{S}_{q}(\boldsymbol{0})$. Then by partial summation,
	\begin{align*}
		&\sum_{Q^{1-\varepsilon_0}<q\ll Q}\frac{\widetilde{S}_{q}(\boldsymbol{0})\widetilde{\CI}_{\frac{q}{Q}}(w;\boldsymbol{0})}{q^3}\\&=-\int_{Q^{1-\varepsilon_0}}^{cQ}A_t\frac{\partial}{\partial t}\left(\frac{\widetilde{\CI}_{\frac{t}{Q}}(w;\boldsymbol{0})}{t^3}\right)\operatorname{d}t+\left[A_t\frac{\widetilde{\CI}_{\frac{t}{Q}}(w;\boldsymbol{0})}{t^3}\right]_{Q^{1-\varepsilon_0}}^{cQ}\\ &\ll_\varepsilon Q^{-\frac{1}{2}+\varepsilon}\int_{Q^{-\varepsilon_0}}^{c} r^{\frac{5}{2}+\varepsilon}\left(\left|\frac{1}{r^3}\frac{\partial\widetilde{\CI}_{r}(w;\boldsymbol{0})}{\partial r}\right|+\left|\frac{\widetilde{\CI}_{r}(w;\boldsymbol{0})}{r^4}\right|\right) \operatorname{d}r+Q^{-\frac{1}{2}(1-\varepsilon_0)+\varepsilon}\\ & \ll_\varepsilon Q^{-\frac{1}{2}+\varepsilon}\int_{Q^{-\varepsilon_0}}^{c} r^{-\frac{3}{2}+\varepsilon}\operatorname{d}r+Q^{-\frac{1}{2}(1-\varepsilon_0)+\varepsilon}\ll Q^{-\frac{1}{2}(1-\varepsilon_0)+\varepsilon}.
	\end{align*}
On the other hand, using \cite[Lemma 13]{H-Bdelta}, 
\begin{align*}
	&\sum_{q\ll Q^{1-\varepsilon_0}}\frac{\widetilde{S}_{q}(\boldsymbol{0})\widetilde{\CI}_{\frac{q}{Q}}(w;\boldsymbol{0})}{q^3}\\ =&\sum_{q\ll Q^{1-\varepsilon_0}}\frac{\widetilde{S}_{q}(\boldsymbol{0})}{q^3}\widetilde{\CI}(w)+O_M\left(\sum_{q\ll Q^{1-\varepsilon_0}}\frac{|\widetilde{S}_{q}(\boldsymbol{0})|}{q^3}\left(\frac{q}{Q}\right)^M\right)\\ =&L^4\BL(1,\psi_0)\widetilde{\mathfrak{S}}_F(p_0;L,\blambda)\widetilde{\CI}(w)+O_{\varepsilon,M}(Q^{-\frac{1}{2}(1-\varepsilon_0)+\varepsilon}+Q^{-M\varepsilon_0+\varepsilon}).
\end{align*}
It remains to adjust the choice of $M$ and $\varepsilon_0$.
\end{proof}

\begin{proof}[Sketch of proof of  Proposition \ref{prop:c=0} assuming \eqref{eq:m0deltasq}]
On reusing the argument in the proof of Proposition \ref{prop:singnonsq} above, combined with the proof of \cite[Proposition 5.9]{Huang} we can show that
	$$\sum_{q\leqslant X}\widetilde{S}_{q}(\boldsymbol{0})=\frac{1}{3}L^4\widetilde{\mathfrak{S}}_F(p_0;L,\blambda)X^3+O_\varepsilon(X^{\frac{5}{2}+\varepsilon}),$$
	$$\sum_{q\leqslant X}\frac{\widetilde{S}_{q}(\boldsymbol{0})}{q^3}=\widetilde{\mathfrak{S}}_F(p_0;L,\blambda)\left(L^4 \log X+\gamma\right)+O_\varepsilon(X^{-\frac{1}{2}+\varepsilon}),$$ where $\gamma$ is the Euler constant. 
	Using these, the proof of \cite[Theorem 5.8]{Huang} goes \emph{mutatis mutandis}.
\end{proof}

\subsection{Completion of proof of main theorems}\label{se:completeproof}
We recall \eqref{eq:Poisson} and the choice of $Q$ \eqref{eq:Q}. Taking Theorem \ref{thm:cneq0sum} and Proposition \ref{prop:c=0} into account, we let $$a_h:=a_0+L^2\CK_h. $$ assuming \eqref{eq:m0deltasq}, while we let $$b_h:= L^2\CK_h$$ assuming \eqref{eq:notsq}. We complete the proof by letting $A:=\frac{1}{4}(1-\frac{\sqrt{2}}{2})-A_0$. \qed

		\section*{Acknowledgements}
			We are very grateful to Dasheng Wei and Fei Xu for many helpful discussions. We thank Tim Browning and Roger Heath-Brown for their interests and comments.

\end{document}